\numberwithin{equation}{section}
\DeclareSymbolFontAlphabet{\mathbb}{AMSb}
\DeclareSymbolFontAlphabet{\mathbbl}{bbold}
\newtheorem{thm}{Theorem}[section]
\newtheorem{thmx}{Theorem}
\newtheorem{lem}[thm]{Lemma}
\newtheorem{prop}[thm]{Proposition}
\newtheorem{cor}[thm]{Corollary}
\theoremstyle{definition}
\newtheorem{defn}[thm]{Definition}
\newtheorem{eg}[thm]{Example}
\newtheorem{rem}[thm]{Remark}
\theoremstyle{remarks}
\newtheorem*{rem*}{Remarks}
\newtheoremstyle{case}{}{}{}{}{}{:}{ }{}
\theoremstyle{case}
\newcommand{\F}{\mathbb{F}}
\title[On some $p$-transitive association schemes]{On some $p$-transitive association schemes}
\begin{document}
\author{Yu Jiang}
\address[Y. Jiang]{Division of Mathematical Sciences, Nanyang Technological University, SPMS-MAS-05-34, 21 Nanyang Link, Singapore 637371.}
\email[Y. Jiang]{jian0089@e.ntu.edu.sg}


\begin{abstract} In this paper, for any prime $p$, we propose the notion of a $p$-transitive association scheme. This notion aims to generalize the fact that the regular module of a group algebra of a finite group has a unique trivial submodule to the case of the regular modules of modular adjacency algebras. We completely determine the $p$-transitive quasi-thin association schemes and the $p$-transitive association schemes with thin thin residue by their structure theory properties. We also get some results with independent interests.
\vspace{-1em}
\end{abstract}
\maketitle
\noindent{\textbf{Keywords.} Association scheme; Quasi-thin scheme; Scheme with thin thin residue\\
\textbf{Mathematics Subject Classification 2010.} 05E30}
\section{Introduction}
Let $X$ be a non-empty finite set and briefly call an association scheme on $X$ a scheme. Let $\F$ be a field of positive characteristic $p$ and denote the $\F$-adjacency algebra of a scheme $S$ by $\F S$. It is well known that $\F S$ controls the structure of $S$. However, compared with the complex adjacency algebra of $S$, very little is known for $\F S$. To understand the structure of $S$ better, it is very necessary to study $\F S$.

For a finite-dimensional $\F$-algebra $A$, the regular $A$-module is very important in studying $A$. For example, $A$ is a semi-simple $\F$-algebra if and only if the regular $A$-module is completely reducible. Therefore we are interested in understanding the structure of the regular $\F S$-module. It is known that the notion of $\F S$ generalizes the notion of the group algebra $\F G$ of a finite group $G$ (see \cite{EI}). We thus want to generalize the results of the regular $\F G$-module to the case of the regular $\F S$-module.

Since the regular $\F G$-module is a transitive permutation module, it always has a unique trivial $\F G$-submodule. The regular $\F S$-module may have more than one trivial $\F S$-submodules (see Example \ref{E;mainexample}). So we propose the notion of a $p$-transitive scheme. Call $S$ a $p$-transitive scheme if the regular $\F S$-module has a unique trivial $\F S$-submodule. This paper focuses on studying the $p$-transitive schemes. Let $O^\vartheta(S)$ (resp. $O_\vartheta(S)$) denote the thin residue (resp. radical) of $S$. For any $T\subseteq S$, let $\langle T\rangle$ denote the closed subset of $S$ generated by $T$. We state our main results as follows.
\begin{thmx}\label{T;A}
Let $S$ be a quasi-thin scheme. Then $S$ is a $p$-transitive scheme if and only if $p>2$ or $p=2$ and $S=O^\vartheta(S)O_\vartheta(S)$ (complex product of $O^\vartheta(S)$ and $O_\vartheta(S)$).
\end{thmx}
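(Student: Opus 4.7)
The plan is to translate $p$-transitivity into a concrete linear-algebra condition on the regular module and then exploit the structure theory of quasi-thin schemes. As a first step, I would verify that the element $\chi := \sum_{\sigma \in S}\sigma$ always generates a trivial $\F S$-submodule of $\F S$, via the identity $\tau\chi = n_\tau\chi$ coming from $\sum_{\sigma\in S}p^\rho_{\tau\sigma} = n_\tau$. Hence $S$ is $p$-transitive iff the fixed-point space $V := \{v \in \F S : \tau v = n_\tau v \text{ for all } \tau \in S\}$ is one-dimensional, and spelled out in structure constants, $V$ is the solution space of the homogeneous system $\sum_\sigma c_\sigma p^\rho_{\tau\sigma} = n_\tau c_\rho$.

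For the sufficiency when $p > 2$: every valency lies in $\{1,2\}$ and is therefore invertible in $\F$. Given $v = \sum c_\sigma \sigma \in V$, I would apply the defining equations with $\tau = \sigma^*$ and use the relation $\sigma^*\sigma = n_\sigma 1 + \cdots$ to express $c_\sigma$ as $c_1$ modulo contributions that can be bootstrapped. Invertibility of every $n_\sigma$ should then propagate $c_1$ throughout $S$, forcing $v \in \F\chi$.

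For the sufficiency when $p = 2$ and $S = O^\vartheta(S)O_\vartheta(S)$: the quotient $S/O^\vartheta(S)$ is a thin scheme, hence the scheme of an (elementary abelian $2$-)group $G$, whose group algebra $\F G$ is a transitive permutation module possessing a unique trivial submodule. The plan is to push any $v \in V$ down to $\F G$, observe it must coincide with the image of $\chi$, then use the factorization $S = O^\vartheta(S)O_\vartheta(S)$ together with the identification of $\F O_\vartheta(S)$ as a subalgebra to reconstruct $v$ uniquely from its pushforward, forcing $v \in \F\chi$.

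For the necessity when $p = 2$ and $S \neq O^\vartheta(S)O_\vartheta(S)$: I plan to exhibit a second fixed vector by choosing coefficients supported on relations lying outside $O^\vartheta(S)O_\vartheta(S)$, exploiting the fact that valency-$2$ contributions vanish in characteristic $2$ and thereby kill the obstructions that would force uniqueness in odd characteristic. I expect this last step to be the main obstacle: producing the extra trivial submodule will likely require concrete structural information about $O^\vartheta(S)$ and its interaction with $O_\vartheta(S)$ inside a quasi-thin scheme, rather than a purely formal argument.
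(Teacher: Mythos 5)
Your reduction to the fixed-point space $V$ and your treatment of $p>2$ are fine and agree with the paper: for a quasi-thin scheme every valency is $1$ or $2$, hence prime to $p$ when $p>2$, and applying $\overline{A_{u^*}}$ to a fixed vector and reading off the coefficient of $\overline{A_0}$ (which is $c_u\overline{k_u}$ by Lemma \ref{L;Intersectionnumber}(v)) propagates $c_0$ to every coefficient; this is exactly Lemma \ref{L;vectors}. The two $p=2$ directions, however, are where all the content lies, and both of your plans have genuine gaps.

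For sufficiency ($p=2$ and $S=O^\vartheta(S)O_\vartheta(S)$), the pushforward to $\F(S//O^\vartheta(S))$ does not do what you need. The natural algebra map to the quotient rescales $\overline{A_s}$ by $k_s/k_{s^{O^\vartheta(S)}}$, and since the quotient is thin these scalars equal $k_s$, which vanish in characteristic $2$ for every non-thin relation; so the pushforward forgets precisely the coefficients of the valency-$2$ relations, and showing those coefficients are all equal to $c_0$ is the entire difficulty. ``Reconstructing $v$ uniquely from its image'' therefore begs the question: you must show $V$ meets the (large) kernel only in multiples of $\overline{J}$. The paper instead works directly with the support $U(v)$ of a fixed vector and proves (Lemma \ref{L;corelemma}) that when $R_0\in U(v)$, the set $U(v)$ satisfies $R_xR_{y^*}R_yR_z\subseteq U(v)$ for thin $R_x$ and $R_z\in U(v)$ (a ``singular subset''), which forces $U(v)\supseteq O^\vartheta(S)O_\vartheta(S)=S$; the proof is a six-case analysis of the products $A_yA_z$ of valency-$2$ relations via the Hirasaka--Muzychuk classification (Lemma \ref{L;quasi-thincomputation}), and nothing in your sketch substitutes for it. For necessity, your instinct to produce a second fixed vector supported off (equivalently, on) $O^\vartheta(S)O_\vartheta(S)$ is right, but the construction is not automatic: one must verify that the indicator vector of $O^\vartheta(S)O_\vartheta(S)$ is fixed by every $\overline{A_a}$, which uses that this set is closed (indeed the minimal singular subset, Lemma \ref{L;singularsubset}) together with $\overline{k_a}=0$ for non-thin $a$ (Lemma \ref{L;p-valenced}). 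Contrary to your expectation, this is the routine direction; the case analysis in the sufficiency proof is the real obstacle. Finally, a minor point: $S//O^\vartheta(S)$ is a group but need not be an elementary abelian $2$-group (any thin scheme is quasi-thin and has trivial thin residue), though fortunately your argument never actually needs that claim.
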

\begin{thmx}\label{T;B}
Let $S$ be a scheme with thin thin residue. Then $S$ is a $p$-transitive scheme if and only if $S=\langle S_{p'}\rangle$, where $S_{p'}\subseteq S$ and $S_{p'}$ contains exactly all relations whose valencies are not divisible by $p$.
\end{thmx}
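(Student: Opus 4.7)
The plan is to identify the space $L \subseteq \F S$ of trivial fixed vectors (those $v$ with $\sigma_s v = k_s v$ for every $s\in S$) with an explicit space of functions on the finite group $G := S/\!/T$, where $T := O^\vartheta(S)$. By hypothesis $T$ is thin, and since $T$ is the thin residue, $G$ is also thin, hence a finite group. The first task is to understand the $T$-cosets: since $T$ is thin and normal in $S$, each coset $C \in G$ consists of relations of a common valency $k_C$ and is supported on $\bigsqcup_O O \times f_C(O)$ for a fixed permutation $f_C$ of the $T$-orbits of $X$. Setting $E_C := \sum_{r\in C}\sigma_r$ and $\mathcal{E} := \operatorname{span}_{\F}\{E_C : C \in G\}$, a direct matrix computation yields the key identity
\[
\sigma_s E_C = k_s E_{DC} \qquad \text{for all } s \in D \in G,
\]
making $\mathcal{E}$ an $\F S$-submodule of the regular module.

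The next step is the inclusion $L \subseteq \mathcal{E}$. For $v=\sum_s a_s \sigma_s \in L$ and $t \in T$ one has $k_t=1$, hence $\sigma_t v = v$; viewing $\sigma_t$ as a permutation matrix, this forces the matrix $V$ of $v$ to satisfy $V(x,z)=V(x',z)$ whenever $Tx=Tx'$. A counting argument using that, for fixed column index $z$ in the codomain $T$-orbit of $C$, the assignment $x \mapsto r(x,z)$ from the row $T$-orbit into $C$ hits each $r \in C$ with multiplicity $k_C$ (because the pair of $T$-orbits is partitioned by the relations of $C$) then forces $a_r = a_{r'}$ whenever $r,r' \in C$, so $v \in \mathcal{E}$.

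Writing any $v=\sum_C a_C E_C \in \mathcal{E}$ and invoking the key identity, the remaining conditions $\sigma_s v = k_s v$ become $k_s(a_{D^{-1}C'}-a_{C'})=0$ for all $C' \in G$ and all $s \in D$. For $s \in S_{p'}$ this forces $a$ to be invariant under left multiplication by $D=sT$, while for $s \notin S_{p'}$ it is vacuous. Letting $K := \langle S_{p'}\rangle/T \leq G$ be the subgroup generated by $\{sT : s \in S_{p'}\}$, I conclude that $L$ is precisely the space of left-$K$-invariant $\F$-valued functions on $G$, so $\dim L = [G:K]$. Since $K=G$ is equivalent to $\langle S_{p'}\rangle = S$, the theorem follows.

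The main obstacle is the inclusion $L \subseteq \mathcal{E}$: converting the module equation $\sigma_t v=v$ into the purely combinatorial statement that the coefficient vector is constant on each $T$-coset requires a careful accounting of how the relations in a coset partition the corresponding pair of $T$-orbits, and this is exactly where the thin-thin-residue hypothesis enters. Once $L \subseteq \mathcal{E}$ is in hand, the remainder of the proof reduces to linear algebra over the group algebra $\F G$.
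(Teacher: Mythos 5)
Your proposal is correct, but it takes a genuinely different route from the paper. The paper never leaves the adjacency algebra and proves the two implications separately: for necessity it shows (Lemma \ref{L;necessarypart}) that the indicator vector of \emph{any} closed subset containing $O^\vartheta(S)\cup S_{p'}$ spans a trivial submodule, so uniqueness forces $\langle S_{p'}\rangle=S$ (Corollary \ref{C;necessarypart}); for sufficiency it takes an arbitrary trivial fixed vector $v$ with $R_0\in U(v)$, proves by induction that $(S_{p'})^m\subseteq U(v)$ for all $m$ (Lemma \ref{L;corecorelemma}), hence $U(v)=S$, then uses a translation-by-$\overline{J}$ trick to force all coefficients to agree, reducing the case $R_0\notin U(v)$ to this one via Lemma \ref{L;smalllemma}. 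You instead pass to the quotient group $G=S/\!/O^\vartheta(S)$ (thin because $O^\vartheta(S)$ is strongly normal), use the coset structure to get the identity $\sigma_sE_C=k_sE_{DC}$, show the fixed space $L$ lies in the span of the $E_C$, and identify $L$ with the left-$K$-invariant functions on $G$ where $K=\langle S_{p'}\rangle/\!/O^\vartheta(S)$. This is strictly stronger than the theorem: it gives the dimension formula $\dim L=[G:K]$ and an explicit description of all trivial fixed vectors, and it handles both implications at once; the price is the quotient-scheme machinery (common valency on cosets, the partition of $O\times f_C(O)$, thinness of $S/\!/O^\vartheta(S)$), all of which the paper's more elementary intersection-number argument avoids. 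I see no gap: every structural fact you assert does hold under the thin-thin-residue hypothesis. Two small points worth recording explicitly in a write-up: $O^\vartheta(S)\subseteq O_\vartheta(S)\subseteq S_{p'}$, so that $T\subseteq\langle S_{p'}\rangle$ and $K$ is a well-defined subgroup of $G$; and $\dim L=1$ is equivalent to uniqueness of the trivial submodule because $\overline{J}$ is always a nonzero element of $L$.
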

This paper is organized as follows. In Section $2$, we set up the notation and give some required results. We show Theorems \ref{T;A} and \ref{T;B} in Sections $3$ and $4$ respectively.
\section{Notation and preliminaries}
For a general background on association schemes, one may refer to \cite{EI}, \cite{Z2}, or \cite{Z3}. In this section, we set up the notation and present some preliminary results.
\subsection{General conventions}
Throughout this paper, we fix a field $\F$ of positive characteristic $p$ and a non-empty finite set $X$. Let $\mathbb{N}$ denote the set of all natural numbers. For a non-empty set $Y$, let $\langle Y\rangle_\F$ be the $\F$-linear space generated by $Y$. If $Y=\{y\}$, set $\langle y\rangle_\F=\langle \{y\}\rangle_\F$. The addition, multiplication, and scalar multiplication of matrices displayed in this paper are the usual matrix operations. A scheme always means an association scheme on $X$. All modules are finitely generated left modules.
\subsection{Theory of schemes}
Let $S=\{R_0, R_1,\ldots, R_d\}$ be a partition of the cartesian product $X\times X$, where $R_i\neq \varnothing$ for any $0\leq i\leq d$. Then $S$ is called a scheme of class $d$ if the following conditions hold:
\begin{enumerate}[(i)]
\item $R_0=\{(x,x): x\in X\}$;
\item For any $0\leq i\leq d$, we have $0\leq i^*\leq d$, where $\{(x,y): (y,x)\in R_i\}=R_{i^*}\in S$;
\item For any $0\leq i,j,k\leq d$ and $(x,y), (x',y')\in R_k$, the following equality holds: $|\{z\in X: (x,z)\in R_i,\ (z,y)\in R_j\}|=|\{z\in X: (x',z)\in R_i,\ (z,y')\in R_j\}|.$
\end{enumerate}

In this paper, $S=\{R_0, R_1,\ldots, R_d\}$ is a fixed scheme of class $d$. Each member of $S$ is called a relation. By the definition of $S$, for any $0\leq i,j,k\leq d$ and $(x,y)\in R_k$, there is an integer $p_{ij}^k$ such that $|\{z\in X: (x,z)\in R_i,\ (z,y)\in R_j\}|=p_{ij}^k$. For any $0\leq x,y\leq d$, set $k_x=p_{xx^*}^0$ and note that $k_x>0$. It is called the valency of relation $R_x$. Moreover, let $\delta_{ab}$ be the usual Kronecker delta for the integers $a$ and $b$. It is not difficult to get that $p_{0x}^y=p_{x0}^y=\delta_{xy}$.
For any non-empty subsets $U, V, W$ of $S$, set $$UV=\{R_i\in S: \exists\ R_u\in U,\ \exists\ R_v\in V,\ p_{uv}^i>0\}.$$ The operation between $U$ and $V$ is called the complex multiplication of $U$ and $V$. Inductively, for any given $2<m\in \mathbb{N}$ and $m$ non-empty subsets $U_1, \ldots, U_m$ of $S$, the product $U_1\cdots U_m$ with respect to the complex multiplication is defined to be $(U_1\cdots U_{m-1})U_m$. If there exists $0\leq i\leq d$ such that $U_j=\{R_i\}$ for some $1\leq j\leq m$, we always use $R_i$ to replace $U_j$ in the product $U_1\cdots U_m$. For example, if $d\geq 3$, then $\{R_1, R_2\}R_3R_2\{R_2, R_3\}=\{R_1, R_2\}\{R_3\}\{R_2\}\{R_2, R_3\}$. By \cite[Lemma 1.3.1]{Z3}, the complex multiplication is associative. For any $R_k\in S$, $R_0R_k=R_kR_0=\{R_k\}$. If $U\subseteq V$, we also have $UW\subseteq VW$ and $WU\subseteq WV$. We list some needed results.
\begin{lem}\label{L;Intersectionnumber}
For any $0\leq i,j,\ell\leq d$, $S$ has the following properties.
\begin{enumerate}[(i)]
\item [\em (i)] \cite[Lemma 1.1.2 (iii)]{Z3} We have $k_i=k_{i^*}$.
\item [\em (ii)] \cite[Lemma 1.1.3 (iii)]{Z3} We have $\sum_{u=0}^d p_{iu}^j=k_i$.
\item [\em (iii)]\cite[Lemma 1.1.3 (iv)]{Z3} We have $k_ik_j=\sum_{u=0}^dp_{ij}^u k_u$.
\item [\em (iv)]\cite[Proposition 2.2 (vi)]{EI} We have $k_\ell p_{ij}^\ell=k_ip_{\ell j^*}^i=k_jp_{i^*\ell}^j$.
\item [\em (v)]\cite[Lemmas 1.1.2 and 1.5.2]{Z3} We have $p_{ij}^0=\delta_{i^*j}k_i$. Moreover,  $|R_iR_j|$ is no more than the greatest common divisor of $k_i$ and $k_j$.
\item [\em (vi)] \cite[Theorem 1.2.7]{Z2} Assume that $k_i=1$. If $R_iR_j=\{R_\ell\}$, we have $k_j=k_\ell$ and $p_{ij}^\ell=1$. Similarly, if $R_jR_i=\{R_\ell\}$, we have $k_j=k_\ell$ and $p_{ji}^\ell=1$.
\end{enumerate}
\end{lem}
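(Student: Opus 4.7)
The plan is to observe that Lemma \ref{L;Intersectionnumber} is a catalogue of standard intersection-number identities, each provable by a one- or two-line counting argument and each already recorded in the cited references; my strategy is to reproduce the underlying counting rather than invent anything new.

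For (i) I would count $|R_i|$ in two ways: the first projection has fibres of size $k_i$, giving $|R_i|=|X|k_i$, while the swap $(x,y)\mapsto(y,x)$ bijects $R_i$ with $R_{i^*}$, so $|X|k_i=|X|k_{i^*}$. For (ii), fix any $(x,y)\in R_j$ and partition the $k_i$ vertices $z$ with $(x,z)\in R_i$ according to the unique relation $R_u$ containing $(z,y)$; the block sizes are $p_{iu}^j$ and the union has size $k_i$. For (iii), fix $x\in X$ and count pairs $(z,y)$ with $(x,z)\in R_i$ and $(z,y)\in R_j$ in two ways: enumerating $z$ first gives $k_ik_j$, and enumerating $y$ first while partitioning by the relation $R_u$ containing $(x,y)$ gives $\sum_u k_u p_{ij}^u$.

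For (iv) I would use triple-counting on the set of triples $(x,y,z)$ with $(x,y)\in R_\ell$, $(x,z)\in R_i$ and $(z,y)\in R_j$, evaluating the cardinality in three different orders (choose the pair in $R_\ell$, in $R_i$, or in $R_j$ first) to obtain the three equal expressions. For (v), the identity $p_{ij}^0=\delta_{i^*j}k_i$ falls out of the definitions of $R_0$ and $i^*$ together with (iii); for the bound $|R_iR_j|\le\gcd(k_i,k_j)$ I would argue as in \cite[Lemma 1.5.2]{Z3}, applying (iv) to constrain the valencies of the relations that can appear in $R_iR_j$. Finally, for (vi) the assumption $k_i=1$ forces the digraph of $R_i$ to be the graph of a bijection on $X$; post-composing this bijection with the fibres of the second projection of $R_j$ yields $k_j=k_\ell$ and $p_{ij}^\ell=1$ at once.

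The only step I expect to require any real thought is the gcd bound in (v); everything else is a routine fibre count. Since all six items come with explicit citations, the cleanest option is simply to refer to those references, and that is presumably what the paper does.
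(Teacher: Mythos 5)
Your proposal is correct and matches the paper, which offers no proof of Lemma \ref{L;Intersectionnumber} at all and simply relies on the citations to \cite{Z3}, \cite{EI}, and \cite{Z2} attached to each item. The counting arguments you sketch for (i)--(iv) and (vi) are the standard ones and are sound, and deferring the $\gcd$ bound in (v) to \cite[Lemma 1.5.2]{Z3} is exactly what the paper does.
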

Let $1<m\in \mathbb{N}$ and $\varnothing\neq T\subseteq S$. Let $T^*=\{R_{i^*}: R_i\in T\}$ and $T^m$ be the product $$ \underbrace{T\cdots T}_{\text{m copies}}.$$ By convention, set $T^0=\{R_0\}$ and $T^1=T$. The subset $T$ is called a closed subset of $S$ if $T^*T\subseteq T$. Note that both $\{R_0\}$ and $S$ are closed subsets of $S$. Let $\mathcal{C}$ denote the set of all closed subsets of $S$. By Lemma \ref{L;Intersectionnumber} (iv), $p_{i^*i}^0=k_i>0$ for any $0\leq i\leq d$. So we have $R_0\in T$, $T^*=T$, and $T^2\subseteq T$ if $T\in \mathcal{C}$. Moreover, let $T_1,\ldots, T_m$ be $m$ closed subsets of $S$. Note that $\bigcap_{i=1}^mT_i\in \mathcal{C}$. Let $H\subseteq S$. Use $\langle H\rangle$ to denote $$ \bigcap_{H\subseteq K\in \mathcal{C}}K.$$ Therefore $\langle H\rangle\in \mathcal{C}$. The thin radical of $S$, denoted by $O_\vartheta(S)$, is defined to be the subset $\{R_i\in S: k_i=1\}$ of $S$. Each relation of $O_\vartheta(S)$ is called a thin relation. By Lemma \ref{L;Intersectionnumber} (v) and (vi), notice that $O_\vartheta(S)\in \mathcal{C}$. We need the following result.
\begin{lem}\label{L;closedset}\cite[Lemma 3.1.1 (ii)]{Z3}
Let $\varnothing\neq H\subseteq S$. Then $\langle H\rangle=\bigcup_{i\in \mathbb{N}\cup\{0\}}H^i$.
\end{lem}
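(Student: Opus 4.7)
The plan is to prove $\langle H\rangle = U$, where $U := \bigcup_{i\in\mathbb{N}\cup\{0\}} H^i$, by establishing both inclusions. The forward inclusion $U\subseteq \langle H\rangle$ is a straightforward induction: fix any closed subset $K\in\mathcal{C}$ containing $H$ and show $H^i\subseteq K$ for every $i\geq 0$. The base cases $i=0,1$ use $R_0\in K$ (noted in the preliminaries just before the lemma) and $H\subseteq K$. For the inductive step I would use that, as also observed right before this lemma, a closed subset satisfies $K^*=K$ and $K^*K\subseteq K$, hence $K\cdot K\subseteq K$; combined with the monotonicity of complex multiplication ($U\subseteq V \Rightarrow UW\subseteq VW$), this gives $H^{i+1}=H^i\cdot H\subseteq K\cdot K\subseteq K$. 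Intersecting over all such $K$ yields $U\subseteq \langle H\rangle$.

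For the reverse inclusion it suffices to verify that $U$ is itself a closed subset containing $H$, since then $\langle H\rangle\subseteq U$ by the minimality built into the definition of $\langle H\rangle$. Containment of $H$ is clear from $H^1=H$. For the compatibility with complex multiplication, the associativity result cited in the preliminaries gives $H^i\cdot H^j = H^{i+j}$, and so $U\cdot U\subseteq U$. Combined with $R_0\in U$ (via $H^0$), this reduces the remaining verification to the single statement $U^*\subseteq U$.

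This $*$-closure is the main obstacle. Dualizing an $i$-fold complex product reverses the order of factors, so $(H^i)^*\subseteq (H^*)^i$, which reduces the task to proving $H^*\subseteq U$. The cleanest way to handle it is to observe that any closed subset containing $H$ must also contain $H^*$, so $\langle H\rangle = \langle H\cup H^*\rangle$; one then simply runs the same argument with $H\cup H^*$ in place of $H$, after which $*$-closure of the corresponding union is automatic and the two sides of the claimed equality coincide. Everything else is routine bookkeeping of exponents; the genuine content is packed into $K^*K\subseteq K \Rightarrow K^2\subseteq K$ and the associativity of the complex multiplication, both of which are already available.
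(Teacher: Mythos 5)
The paper offers no proof of this lemma (it is quoted from \cite[Lemma 3.1.1 (ii)]{Z3}), so your argument has to be judged against the standard one. The forward inclusion and the verification that $U:=\bigcup_{i}H^i$ satisfies $R_0\in U$ and $UU\subseteq U$ are fine. The problem is exactly at the point you yourself flag as ``the main obstacle'': your resolution of the $*$-closure is circular. Running the argument with $H':=H\cup H^*$ in place of $H$ proves $\langle H\rangle=\langle H'\rangle=\bigcup_i (H')^i$, which is a statement about $\bigcup_i(H')^i$, not about $\bigcup_i H^i$. To conclude that ``the two sides of the claimed equality coincide'' you would need $\bigcup_i (H')^i\subseteq\bigcup_i H^i$, and that containment is equivalent to $H^*\subseteq\bigcup_i H^i$ --- precisely the statement you set out to avoid. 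Nothing in your write-up establishes it, and it is not ``routine bookkeeping'': in an infinite analogue it would be false (think of $H=\{g\}$ for $g$ of infinite order in a group, where $\bigcup_i\{g\}^i=\{e,g,g^2,\dots\}$ misses $g^{-1}$), so any correct proof must use the finiteness of $S$, which your argument never invokes.

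The missing ingredient is the fact that a non-empty subset $T\subseteq S$ with $R_0\in T$ and $TT\subseteq T$ automatically satisfies $T^*\subseteq T$, applied to $T=U$ (note $U$ has these properties by the parts of your argument that do work). One proof: fix $x\in X$ and set $Y=xT:=\{y\in X:(x,y)\in R_t \text{ for some } R_t\in T\}$; then $x\in Y$ and $yR_t\subseteq Y$ for all $y\in Y$, $R_t\in T$. Counting the $R_t$-edges inside $Y\times Y$ by out-degrees gives exactly $|Y|k_t$, while counting by in-degrees gives at most $|Y|k_{t^*}=|Y|k_t$ by Lemma \ref{L;Intersectionnumber} (i); equality forces every $R_t$-in-neighbour of every point of $Y$ to lie in $Y$, so $xR_{t^*}\subseteq Y=xT$ and hence $R_{t^*}\in T$ because the relations partition $X\times X$. (Equivalently, one can stabilize the increasing chain $\bigcup_{i\le n}H^i$ inside the finite set $S$ and run the same count.) With that lemma in hand your outline closes up; without it, the proof is incomplete.
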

Assume further that $T\in \mathcal{C}$. Then $T$ is called a strongly normal closed subset of $S$ if $R_{i^*}TR_i\subseteq T$ for any $0\leq i\leq d$. Note that $S$ itself is a strongly normal closed subset of $S$. Let $\mathcal{D}$ be the set of all strongly normal closed subsets of $S$. Assume further that $T_1,\ldots, T_m\in \mathcal{D}$. Note that $\bigcap_{i=1}^m T_i\in \mathcal{D}$. The thin residue of $S$, denoted by $O^\vartheta(S)$, is defined to be
$$\bigcap_{K\in \mathcal{D}}K.$$
So $O^\vartheta(S)\in \mathcal{D}$. We present a known result of $O^\vartheta(S)$ as follows.
\begin{lem}\label{L;Residue}\cite[Theorem 3.2.1 (ii)]{Z3}
We have $O^\vartheta(S)=\langle \bigcup_{i=0}^d(R_{i^*}R_i)\rangle$.
\end{lem}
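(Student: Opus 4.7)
The plan is to prove the equality $O^\vartheta(S)=M$ with $M:=\langle\bigcup_{i=0}^d R_{i^*}R_i\rangle$ by establishing the two inclusions.

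For $M\subseteq O^\vartheta(S)$, the argument is immediate: since $O^\vartheta(S)$ is strongly normal and contains $R_0$, applying the defining condition with $R_0$ gives $R_{i^*}R_i=R_{i^*}R_0R_i\subseteq O^\vartheta(S)$ for each $0\leq i\leq d$, and the closure of $O^\vartheta(S)$ then delivers $M\subseteq O^\vartheta(S)$.

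For the reverse inclusion $O^\vartheta(S)\subseteq M$, it suffices, by the minimality of $O^\vartheta(S)$ among strongly normal closed subsets, to show that $M$ itself is strongly normal, i.e., $R_{k^*}MR_k\subseteq M$ for every $R_k\in S$. With $N:=\bigcup_{i=0}^d R_{i^*}R_i$, Lemma~\ref{L;closedset} writes $M=\bigcup_{n\geq 0}N^n$, and I induct on $n$. The inductive step is handled by inserting $R_0\in R_kR_{k^*}$ (which holds by Lemma~\ref{L;Intersectionnumber}(iv)): if $R_j\in N^n$ factors as $R_j\in R_aR_b$ with $R_a\in N$ and $R_b\in N^{n-1}$, then $R_{k^*}R_jR_k\subseteq(R_{k^*}R_aR_k)(R_{k^*}R_bR_k)$, and the closure of $M$ under complex multiplication together with the inductive hypothesis reduces matters to the base case $R_{k^*}NR_k\subseteq M$.

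This base case is the main obstacle. Unwinding, $R_{k^*}NR_k=\bigcup_{i=0}^d(R_iR_k)^*(R_iR_k)$, so it amounts to $R_{c^*}R_d\subseteq M$ for every pair $R_c,R_d\in R_iR_k$. The diagonal case $c=d$ is immediate from $R_{c^*}R_c\subseteq N\subseteq M$, but the off-diagonal case $c\neq d$ is subtle, since any naive rewriting returns to $(R_iR_k)^*(R_iR_k)$ itself. My plan to break this circularity is to invoke Lemma~\ref{L;Intersectionnumber}(iv), converting positivity of $p_{c^*d}^\ell$ into positivity of $p_{c\ell}^d$, and to exploit the pointwise realization of $R_c,R_d\in R_iR_k$ to produce a common source $z$ and intermediate vertices $u,v$ with $(z,u),(z,v)\in R_i$ and $(u,x),(v,y)\in R_k$; this forces $(u,v)\in R_{i^*}R_i\subseteq M$, and the crux is then showing that this $M$-equivalence of $u,v$ transports through $R_k$ to place $R_\ell\in M$. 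This combinatorial transport---essentially the assertion that the induced action of $R_k$ on the $M$-coset structure of $X$ is single-valued---is the technical heart of the proof and the hardest step.
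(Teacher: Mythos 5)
The paper does not prove this statement at all---it is quoted directly from \cite[Theorem 3.2.1 (ii)]{Z3}---so your attempt has to be judged on its own terms. Most of your skeleton is sound: the inclusion $M\subseteq O^\vartheta(S)$, the reduction of $O^\vartheta(S)\subseteq M$ to the strong normality of $M$ via minimality, the induction on $n$ in $M=\bigcup_{n}N^{n}$ using Lemma \ref{L;closedset}, and the reduction of the inductive step to the base case by inserting $R_0\in R_kR_{k^*}$ are all correct. But you stop exactly at the step you yourself flag as ``the technical heart'': the off-diagonal containment $R_{c^*}R_d\subseteq M$ for distinct $R_c,R_d\in R_iR_k$. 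Worse, the route you sketch for it is circular. The assertion that ``the induced action of $R_k$ on the $M$-coset structure of $X$ is single-valued'' is precisely equivalent to $R_{k^*}MR_k\subseteq M$, i.e.\ to the strong normality of $M$ that the whole argument is meant to establish; and producing the configuration $(u,v)\in R_{i^*}R_i$ with $(u,x),(v,y)\in R_k$ merely re-expresses the fact that the relation of $(x,y)$ lies in $R_{k^*}R_{i^*}R_iR_k$, which is where you started. As written, the proof has a genuine gap at its crucial point.

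The gap is real but easily repaired, purely algebraically and with no pointwise argument. From $R_c\in R_iR_k$, i.e.\ $p_{ik}^c>0$, Lemma \ref{L;Intersectionnumber} (iv) gives $p_{ck^*}^i>0$, i.e.\ $R_i\in R_cR_{k^*}$, hence $R_{c^*}R_i\subseteq R_{c^*}R_cR_{k^*}$. Therefore
\[
R_{c^*}R_d\subseteq R_{c^*}R_iR_k\subseteq R_{c^*}R_cR_{k^*}R_k\subseteq NN\subseteq M,
\]
using $R_d\in R_iR_k$, associativity of the complex product, and the fact that $M$ is a closed subset containing $N$. Substituting this two-line computation for the proposed ``combinatorial transport'' completes your proof; it also makes the diagonal/off-diagonal case split unnecessary, since the displayed chain covers $c=d$ as well.
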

We are interested in some special schemes. Let us state their definitions as follows.

The scheme $S$ is called a thin scheme if $S=O_\vartheta(S)$. It is known that every finite group can be viewed as a thin scheme (see \cite[Preface]{Z3}). Therefore the notion of a scheme generalizes the notion of a finite group.

The scheme $S$ is called a quasi-thin scheme if $k_i\leq 2 $ for any $0\leq i\leq d$. As a thin scheme is a quasi-thin scheme, the notion of a quasi-thin scheme generalizes the notion of a thin scheme. The quasi-thin schemes were introduced explicitly by Hirasaka and Muzychuk (see \cite{H2}). They are known to enjoy many good properties. For the details of properties of quasi-thin schemes, one may refer to \cite{H2}, \cite{H3},  and \cite{MP}.

The scheme $S$ is called a scheme with thin thin residue if $O^\vartheta(S)\subseteq O_\vartheta(S)$. As a thin scheme is a scheme with thin thin residue, the notion of a scheme with thin thin residue generalizes the notion of a thin scheme. According to our knowledge, the schemes with thin thin residue were introduced by Zieschang (see \cite{Z1}). They are known to enjoy many good properties. For the details of properties of schemes with thin thin residue, one may refer to \cite{Z1}.

The scheme $S$ is called a $p'$-valenced scheme if $p\nmid k_i$ for any $0\leq i\leq d$. As a thin scheme is a $p'$-valenced scheme, the notion of a $p'$-valenced scheme generalizes the notion of a thin scheme.
\subsection{Modules of schemes}
For any a commutative ring $R$ with identity, let $M_X(R)$ denote the full matrix ring of $R$-matrices whose entries are indexed by the members of $X$. Let $\mathbb{Z}$ be the integer ring. By the definition of $M_X(R)$, the ring $M_X(\mathbb{Z})$ and the $\F$-algebra $M_X(\F)$ are defined. Let $I$ be the identity matrix of $M_X(\mathbb{Z})$. Let $J$ be the all-one matrix of $M_X(\mathbb{Z})$. For any integer $a$, let $\overline{a}\in \F$, where
\[\overline{a}=\begin{cases} \underbrace{1+\cdots+1}_{a\ \text{copies}}, & \text{if}\ a>0,\\
\underbrace{-1+\cdots+(-1)}_{-a\ \text{copies}}, & \text{if}\ a<0,\\
0, & \text{if}\ a=0.
\end{cases}
\]
So $-$ induces a ring homomorphism from $\mathbb{Z}$ to $\F$ by sending every integer $a$ to $\overline{a}$. Moreover, $-$ also induces a ring homomorphism from $M_X(\mathbb{Z})$ to $M_X(\mathbb{\F})$ by sending every matrix $(a_{xy})$ to $(\overline{a_{xy}})$. For any $A=(a_{xy})\in M_X(\mathbb{Z})$, we also write $\overline{A}$ for $(\overline{a_{xy}})$ if there is no confusion.

For any $0\leq i\leq d$, the adjacency matrix with respect to $R_i$, denoted by $A_i$, is a $(0,1)$-matrix $(a_{xy})$ of $M_X(\mathbb{Z})$, where, for any $x, y\in X$, we have $a_{xy}=1$ if and only if $(x,y)\in R_i$.
Let $\F S=\langle \bigcup_{i=0}^d\{\overline{A_i}\}\rangle_\F$. By the definition of $S$, note that $\overline{I}=\overline{A_0}$ and $\overline{J}=\sum_{i=0}^d\overline{A_i}$. So $\bigcup_{i=0}^d\{\overline{A_i}\}$ is an $\F$-basis of $\F S$. Moreover, for any $0\leq i,j\leq d$,
\begin{equation}
A_iA_j=\sum_{k=0}^d p_{ij}^kA_k.
\end{equation}
Therefore $\F S$ is an $\F$-subalgebra of $M_X(\F)$. The $\F$-algebra $\F S$ is called the modular adjacency algebra of $S$. As $\F S$ is an $\F$-algebra, note that $\F S$ itself is an $\F S$-module by left multiplication. This module is called the regular $\F S$-module. For any given one-dimensional $\F$-linear space $\langle v\rangle_\F$, formally define
\begin{equation}
\overline{A_i}v=\overline{k_i}v
\end{equation}
for any $0\leq i\leq d$. By (2.1), (2.2), and Lemma \ref{L;Intersectionnumber} (iii), $\langle v\rangle_\F$ can be regarded as an $\F S$-module. This module is unique up to isomorphism and is called the trivial $\F S$-module. In this paper, we are interested in the following definition.
\begin{defn}\label{D;p-transitive}
The scheme $S$ is said to be a $p$-transitive scheme if the regular $\F S$-module contains a unique trivial $\F S$-submodule.
\end{defn}
Let $G$ be a finite group. Recall that the regular $\F G$-module has a unique trivial $\F G$-submodule. We can generalize this fact for the regular $\F S$-module if $S$ is a $p$-transitive scheme. We collect some basic results of $p$-transitive schemes as follows.
\begin{lem}\label{L;allone}
The scheme $S$ is a $p$-transitive scheme if and only if $\langle \overline{J}\rangle_\F$ is the unique trivial $\F S$-submodule of the regular $\F S$-module. If $p\nmid |X|$, then $S$ is a $p$-transitive scheme. If $\F S$ is a semi-simple $\F$-algebra, then $S$ is a $p$-transitive scheme.
\end{lem}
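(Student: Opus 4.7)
The plan is to handle the three assertions in order, all of which exploit the simple fact that $\overline{A_j}$ acts on $\overline{J}$ as the scalar $\overline{k_j}$. First I would verify that $\langle\overline{J}\rangle_\F$ is itself a trivial $\F S$-submodule of the regular module. Using (2.1) and Lemma \ref{L;Intersectionnumber} (ii), one computes
\[
\overline{A_j}\,\overline{J}=\sum_{i=0}^{d}\overline{A_j}\,\overline{A_i}=\sum_{k=0}^{d}\Bigl(\sum_{i=0}^{d}\overline{p_{ji}^{k}}\Bigr)\overline{A_k}=\sum_{k=0}^{d}\overline{k_j}\,\overline{A_k}=\overline{k_j}\,\overline{J};
\]
since $\overline{J}\neq 0$, the space $\langle\overline{J}\rangle_\F$ is a trivial $\F S$-submodule, and the biconditional is then immediate from Definition \ref{D;p-transitive}.

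For the implication $p\nmid|X|\Rightarrow S$ is $p$-transitive, I would construct a central idempotent out of $\overline{J}$. The symmetric identity $\overline{J}\,\overline{A_j}=\overline{k_j}\,\overline{J}$ (from $JA_j=k_jJ$, using Lemma \ref{L;Intersectionnumber} (i)) together with $\sum_{i=0}^{d}k_i=|X|$ gives $\overline{J}^{\,2}=\overline{|X|}\,\overline{J}$. When $\overline{|X|}$ is a unit of $\F$, the element $e:=\overline{|X|}^{-1}\overline{J}$ is a central idempotent satisfying $e\,\overline{A_j}=\overline{k_j}\,e$, so that $e\,\F S=\F\overline{J}$. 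Given any trivial $\F S$-submodule $\langle v\rangle_\F$, summing the identities $\overline{A_j}v=\overline{k_j}v$ over $j$ shows $\overline{J}v=\overline{|X|}v$, hence $ev=v$, and therefore $v\in e\,\F S=\F\overline{J}$, proving uniqueness.

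For the last implication I would pass through the augmentation $\phi:\F S\to\F$ sending $\overline{A_i}\mapsto\overline{k_i}$. Lemma \ref{L;Intersectionnumber} (iii) makes $\phi$ an algebra homomorphism, and $\phi(\overline{A_0})=1$ makes it surjective, so $I:=\ker\phi$ is a two-sided ideal of $\F S$ of codimension one. If $\F S$ is semisimple then Wedderburn's theorem provides a ring decomposition $\F S=I\oplus C$ with $C\cong\F$; the left action of $\F S$ on the one-dimensional ideal $C$ factors through $\F S/I$, which via $\phi$ is the trivial action, so $C$ is a trivial $\F S$-submodule. For any trivial submodule $\langle v\rangle_\F$ one has $Iv=0$, hence $1_Iv=0$ where $1_I$ denotes the unit of the subring $I$; decomposing $v=v_I+v_C$ with $v_I\in I$, $v_C\in C$ and using $1_I\cdot C\subseteq IC=0$ gives $0=1_Iv=v_I$, forcing $v\in C$. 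Since $C$ is one-dimensional, this yields uniqueness, and in particular $C=\F\overline{J}$. No step is a real obstacle; the only subtlety worth flagging is that $\phi$ is an \emph{algebra} homomorphism, which makes its kernel two-sided and hence, under semisimplicity, a ring-theoretic direct summand.
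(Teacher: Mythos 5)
Your proof is correct, but it takes a genuinely different route from the paper's. The paper's proof is essentially three citations: it notes that $\langle\overline{J}\rangle_\F$ is a trivial submodule (giving the first assertion), quotes \cite[Proposition 4]{Han1} for the case $p\nmid|X|$, and derives the semisimple case by combining \cite[Proposition 4.1 2]{Han} (which reduces semisimplicity of $\F S$ to a divisibility condition implying $p\nmid|X|$) with the second assertion. You instead give self-contained arguments: for $p\nmid|X|$ you build the central idempotent $e=\overline{|X|}^{-1}\overline{J}$ from $\overline{J}^{\,2}=\overline{|X|}\,\overline{J}$ and show every trivial submodule lands in $e\,\F S=\F\overline{J}$; for the semisimple case you use the augmentation $\phi$ (an algebra map by Lemma \ref{L;Intersectionnumber} (iii)) and the Wedderburn splitting $\F S=\ker\phi\oplus C$ to trap every trivial submodule inside the one-dimensional component $C$. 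Both of your arguments check out (in particular $JA_j=A_jJ=k_jJ$ via Lemma \ref{L;Intersectionnumber} (i), and the identities $1_Iv_I=v_I$, $IC=CI=0$ needed in the last step all hold since $I$ and $C$ are complementary two-sided ideals). What you gain is a proof independent of the Hanaki and Hanaki--Yoshikawa references, and a third assertion that does not factor through the second, so it applies verbatim to any augmented semisimple algebra; what the paper's version buys is brevity and a direct link to the known semisimplicity criterion for adjacency algebras.
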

\begin{proof}
Note that $\langle \overline{J}\rangle_\F$ is a trivial $\F S$-submodule of the regular $\F S$-module. The first assertion thus follows. The second assertion follows by \cite[Proposition 4]{Han1}. The third assertion is from \cite[Proposition 4.1 2]{Han} and the second assertion. We are done.
\end{proof}
For any $v, w\in \F S$, if $v=\sum_{i=0}^dc_i\overline{A_i}$, where $c_i\in \F$ for any $0\leq i\leq d$, we define $\mathrm{Supp}(v)=\{\overline{A_i}\in \F S: c_i\neq 0\}$ and $U(v)=\{R_i\in S: \overline{A_i}\in \mathrm{Supp}(v)\}$. Note that $U(v)=U(w)$ if and only if $\mathrm{Supp}(v)=\mathrm{Supp}(w)$. We need the following lemma.
\begin{lem}\label{L;ResidueRadical}
Let $\langle v\rangle_\F$ be a trivial $\F S$-submodule of the regular $\F S$-module. For any $R_i\in O_\vartheta(S)$, we have $R_iU(v)=U(v)$.
\end{lem}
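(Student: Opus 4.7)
The plan is to compute $\overline{A_i} v$ in two different ways and then match coefficients on the $\F$-basis $\{\overline{A_j}\}_{j=0}^d$ of $\F S$. Write $v = \sum_{j=0}^d c_j \overline{A_j}$, so that $U(v) = \{R_j : c_j \ne 0\}$. Since $\langle v\rangle_\F$ is a trivial $\F S$-submodule of the regular module and $R_i \in O_\vartheta(S)$ forces $k_i = 1$, formula (2.2) immediately gives $\overline{A_i} v = \overline{k_i} v = v$.

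For the second computation I would first record the standard fact that a thin relation permutes the basis $\{\overline{A_j}\}$ under left multiplication. Concretely, Lemma \ref{L;Intersectionnumber} (v) gives $|R_i R_j| \le \gcd(k_i, k_j) = 1$, so there is a unique index $\sigma_i(j)$ with $R_i R_j = \{R_{\sigma_i(j)}\}$; Lemma \ref{L;Intersectionnumber} (vi) then yields $p_{ij}^{\sigma_i(j)} = 1$, while $p_{ij}^k = 0$ for $k \ne \sigma_i(j)$ by the definition of the complex product. Substituting these into (2.1) gives $\overline{A_i}\,\overline{A_j} = \overline{A_{\sigma_i(j)}}$. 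Using Lemma \ref{L;Intersectionnumber} (i), together with (iv) and (v), one checks that $R_{i^*} R_i = \{R_0\}$, which forces $\sigma_{i^*}$ to be a two-sided inverse of $\sigma_i$, so $\sigma_i$ is a permutation of $\{0, 1, \dots, d\}$. Hence $\overline{A_i} v = \sum_{j=0}^d c_j \overline{A_{\sigma_i(j)}}$.

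Equating this with $v = \sum_{k=0}^d c_k \overline{A_k}$ coefficient-wise gives $c_k = c_{\sigma_i^{-1}(k)}$ for all $k$, so $R_j \in U(v)$ if and only if $R_{\sigma_i(j)} \in U(v)$; equivalently $\sigma_i(U(v)) = U(v)$. Since by the construction of $\sigma_i$ we have $R_i U(v) = \{R_{\sigma_i(j)} : R_j \in U(v)\} = \sigma_i(U(v))$, the desired equality $R_i U(v) = U(v)$ follows. I do not foresee any serious obstacle; the only care needed is the bookkeeping that translates left multiplication by the thin element $\overline{A_i}$ into the basis permutation $\sigma_i$, which is entirely forced by parts (i), (iv), (v), (vi) of Lemma \ref{L;Intersectionnumber}.
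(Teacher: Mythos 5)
Your proof is correct and follows essentially the same route as the paper's: both exploit that left multiplication by the thin element $\overline{A_i}$ permutes the basis $\{\overline{A_j}\}$ (with $\overline{A_{i^*}}$ providing the inverse, via Lemma \ref{L;Intersectionnumber} (v), (vi) and (2.1)) and then compare $\overline{A_i}v$ with $v$. The paper phrases the final step as a cardinality count on supports rather than your explicit coefficient matching against the permutation $\sigma_i$, but this is only a cosmetic difference.
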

\begin{proof}
For any $0\leq j\leq d$, since $k_i=1$, by Lemma \ref{L;Intersectionnumber} (v), (vi), and (2.1), we have $R_iR_j=\{R_k\}$ and $\overline{A_iA_j}=\overline{A_k}$. If $j=i^*$, notice that $k=0$ and $\overline{A_iA_{i^*}}=\overline{A_{i^*}A_{i}}=\overline{I}$. Therefore $\overline{A_k}\in \mathrm{Supp}(\overline{A_i}v)$ if $\overline{A_j}\in \mathrm{Supp}(v)$, which implies that $R_iU(v)\subseteq U(\overline{A_i}v)$. Moreover, we also deduce that $R_iR_x\neq R_iR_y$ for any $R_x, R_y\in U(v)$ and $x\neq y$. As $\overline{A_i}v=v$, we have $|R_iU(v)|=|U(v)|=|U(\overline{A_i}v)|$. So $R_iU(v)=U(\overline{A_i}v)=U(v)$.
\end{proof}
We conclude this section by the following lemma.
\begin{lem}\label{L;vectors}
Let $0\neq v=\sum_{i=0}^dc_i\overline{A_i}\in \F S$ and $\overline{A_j}v=\overline{k_j}v$ for any $0\leq j\leq d$.
\begin{enumerate}[(i)]
\item [\em (i)] If $R_0\notin U(v)$, then $R_u\notin U(v)$ for any $0\leq u\leq d$ and $p\nmid k_u$.
\item [\em (ii)] If $R_0\in U(v)$, then $R_u\in U(v)$ and $c_u=c_0$ for any $0\leq u\leq d$ and $p\nmid k_u$.
\item [\em (iii)] If $S$ is a $p'$-valenced scheme, then $S$ is a $p$-transitive scheme.
\end{enumerate}
\end{lem}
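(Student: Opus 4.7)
The plan is to extract the trivial-module condition coefficient by coefficient. Starting from the eigenvalue identity $\overline{A_j}v = \overline{k_j}v$ and expanding the left side via equation (2.1), I compare the coefficient of each basis element $\overline{A_k}$: the left side contributes $\sum_{i=0}^d c_i\,\overline{p_{ji}^k}$ and the right side contributes $\overline{k_j}\,c_k$. The decisive choice is $k=0$, where Lemma~\ref{L;Intersectionnumber}(v) gives $p_{ji}^0 = \delta_{j^{*}i}k_j$, collapsing the identity to $(c_{j^{*}} - c_0)\,\overline{k_j} = 0$ in $\F$, valid for every $0 \leq j \leq d$.

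Parts (i) and (ii) then follow directly. If $R_0 \notin U(v)$, so $c_0 = 0$, the relation forces $c_{j^{*}} = 0$ whenever $\overline{k_j} \neq 0$, that is, whenever $p \nmid k_j$; since $k_{j^{*}} = k_j$ by Lemma~\ref{L;Intersectionnumber}(i) and $j \mapsto j^{*}$ is a bijection on $\{0,\dots,d\}$, the substitution $u = j^{*}$ gives (i). If instead $c_0 \neq 0$, the same relation gives $c_{j^{*}} = c_0$ for every $j$ with $p \nmid k_j$, and the same reindexing yields (ii); the non-vanishing $c_u = c_0 \neq 0$ automatically places $R_u$ in $U(v)$.

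For (iii), I let $\langle v\rangle_\F$ be an arbitrary trivial $\F S$-submodule of the regular $\F S$-module. Since $S$ is $p'$-valenced, every $k_u$ is a unit in $\F$, so part (i) would force $v = 0$ if $R_0 \notin U(v)$, contradicting $v \neq 0$. Hence $R_0 \in U(v)$, and part (ii) applies to every index $u$, giving $c_u = c_0$ throughout. Therefore $v = c_0\,\overline{J}$ and $\langle v\rangle_\F = \langle \overline{J}\rangle_\F$, so Lemma~\ref{L;allone} concludes that $S$ is a $p$-transitive scheme.

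I expect no serious obstacle: once one spots that only the $k=0$ slot of the defining eigenvalue system needs to be inspected, the lemma reduces to a one-line identity. The only bookkeeping point is the symmetric passage from $j$ to $j^{*}$ when transferring the conclusion to the index $u$, but this is immediate from Lemma~\ref{L;Intersectionnumber}(i).
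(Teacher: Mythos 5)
Your proof is correct and follows essentially the same route as the paper's: both arguments extract the coefficient of $\overline{A_0}$ from the relation $\overline{A_j}v=\overline{k_j}v$ using $p_{ji}^0=\delta_{j^*i}k_j$ and $k_j=k_{j^*}$, and then feed (i) and (ii) into Lemma~\ref{L;allone} for (iii). The only cosmetic difference is that you derive the single identity $(c_{j^*}-c_0)\overline{k_j}=0$ covering both (i) and (ii) at once, whereas the paper obtains (ii) by applying (i) to the auxiliary vector $w=v-c_0\overline{J}$.
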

\begin{proof}
For (i), if there exists some $0\leq u\leq d$ such that $R_u\in U(v)$ and $p\nmid k_u$, let  $\overline{A_{u^*}}v=\sum_{i=0}^d e_i\overline{A_i}$, where $e_i\in\F$ for any $0\leq i\leq d$. By Lemma \ref{L;Intersectionnumber} (i) and (v), we have $e_0=c_u\overline{k_u}\neq 0$. Moreover, as $p\nmid k_u$, we have $R_0\in U(\overline{A_{u^*}}v)=U(\overline{k_u}v)=U(v)$ by Lemma \ref{L;Intersectionnumber} (i), which contradicts the assumption $R_0\notin U(v)$. (i) thus follows.

For (ii), we may assume that $v\notin\langle \overline{J}\rangle_\F$. Let us put $w=v-c_0\overline{J}$. We have $w\neq 0$. As $R_0\in U(v)$, note that $R_0\notin U(w)$ and $\overline{A_x}w=\overline{k_x}w$ for any $0\leq x\leq d$. By (i), there is no $R_u\in U(w)$ such that $p\nmid k_u$. Therefore we have $R_u\in U(v)$ and $c_u=c_0$ for any $0\leq u\leq d$ and $p\nmid k_u$. (ii) is proved.

For (iii), let $\langle y\rangle_\F$ be a trivial $\F S$-submodule of the regular $\F S$-module. Since $S$ is a $p'$-valenced scheme, by (i) and (ii), note that $R_0\in U(y)$ and $y\in \langle \overline{J}\rangle_\F$. We thus can show (iii) by Lemma \ref{L;allone}. The proof is now complete.
\end{proof}
\section{$p$-Transitive quasi-thin schemes}
In this section, we completely determine all $p$-transitive quasi-thin schemes, which establishes Theorem \ref{T;A}. For our purpose, we first present a required definition.
\begin{defn}\label{D;singularset}
Let $T\subseteq S$. Then $T$ is called a singular subset of $S$ if the following conditions hold:
\begin{enumerate}[(i)]
\item $O_\vartheta(S)\subseteq T$;
\item For any $R_x\in O_\vartheta(S)$, $R_y\in S$, and $R_z\in T$, we have $R_xR_{y^{*}}R_{y}R_z\subseteq T$.
\end{enumerate}
\end{defn}
The following lemma summarizes some properties of the singular subsets of $S$.
\begin{lem}\label{L;singularsubset}
If $T$ is a singular subset of $S$, then the following assertions hold.
\begin{enumerate}[(i)]
\item [\em (i)] We have $O^{\vartheta}(S)\subseteq T$.
\item [\em (ii)] We have $O^\vartheta(S)O_\vartheta(S)\in \mathcal{C}$ and $O^{\vartheta}(S)O_\vartheta(S)$ is a singular subset of $S$.
\item [\em (iii)] We have $O^{\vartheta}(S)O_\vartheta(S)\subseteq T$.
\end{enumerate}
\end{lem}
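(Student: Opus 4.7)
The plan is to use the description $O^\vartheta(S)=\bigcup_{n\geq 0}H^n$ obtained from Lemmas \ref{L;Residue} and \ref{L;closedset}, where $H=\bigcup_{i=0}^{d}R_{i^*}R_i$, and to treat the three parts in order.

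For (i), I would induct on $n$ to establish $H^n\subseteq T$. The base case $H^0=\{R_0\}\subseteq O_\vartheta(S)\subseteq T$ is Definition \ref{D;singularset}(i); the inductive step is an immediate application of Definition \ref{D;singularset}(ii) with $R_x=R_0$ and $R_z$ drawn from $H^{n-1}\subseteq T$, which forces $R_{y^*}R_yR_z\subseteq T$ for every $R_y\in S$. Taking the union yields $O^\vartheta(S)\subseteq T$.

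For (ii), the core task is to prove $O^\vartheta(S)O_\vartheta(S)\in\mathcal{C}$; the singular conditions are then straightforward, using $R_0\in O^\vartheta(S)$ for Definition \ref{D;singularset}(i) and $R_{y^*}R_y\subseteq O^\vartheta(S)$ (Lemma \ref{L;Residue}) together with closure for Definition \ref{D;singularset}(ii). The key step is the commutation $R_aO^\vartheta(S)=O^\vartheta(S)R_a$ for every $R_a\in O_\vartheta(S)$: multiplying each of the strong-normality relations $R_{a^*}O^\vartheta(S)R_a\subseteq O^\vartheta(S)$ and $R_aO^\vartheta(S)R_{a^*}\subseteq O^\vartheta(S)$ by the opposite thin relation on the appropriate side, and invoking $R_aR_{a^*}=R_{a^*}R_a=\{R_0\}$ (which follows from $k_a=1$ via Lemma \ref{L;Intersectionnumber}(v)), produces the two opposite inclusions. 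Taking the union over $R_a\in O_\vartheta(S)$ promotes this to $O_\vartheta(S)O^\vartheta(S)=O^\vartheta(S)O_\vartheta(S)$, from which both $(O^\vartheta(S)O_\vartheta(S))^*=O^\vartheta(S)O_\vartheta(S)$ and $(O^\vartheta(S)O_\vartheta(S))^2\subseteq O^\vartheta(S)O_\vartheta(S)$ follow by rearranging factors and using that $O^\vartheta(S)$ and $O_\vartheta(S)$ are themselves closed.

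For (iii), the same inductive scheme as in (i) shows $H^nO_\vartheta(S)\subseteq T$: the base is $O_\vartheta(S)\subseteq T$, and the step again uses Definition \ref{D;singularset}(ii) with $R_x=R_0$ and $R_z$ drawn from $H^{n-1}O_\vartheta(S)\subseteq T$. Taking the union and using distributivity of complex multiplication over unions gives $O^\vartheta(S)O_\vartheta(S)=\bigcup_n H^nO_\vartheta(S)\subseteq T$. The principal obstacle is the closure argument in (ii): one must convert strong normality of $O^\vartheta(S)$ into a genuine commutation with thin relations, which is where the hypothesis $k_a=1$ does its work. Parts (i) and (iii) are then essentially the same one-line induction that feeds $R_x=R_0$ into the singular condition.
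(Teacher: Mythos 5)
Your proof is correct and follows the same overall strategy as the paper: induction on powers of the generating set $\bigcup_i R_{i^*}R_i$ for part (i), a commutation of $O^\vartheta(S)$ with $O_\vartheta(S)$ plus a closure check for part (ii), and a further application of Definition \ref{D;singularset}(ii) for part (iii). The one place where your mechanism genuinely differs is the commutation in (ii): the paper proves the more general fact that $HK=KH$ for \emph{any} closed $H$ and strongly normal $K$, by writing $R_{v^*}R_w\subseteq R_{v^*}R_wR_vR_{v^*}\subseteq KR_{v^*}$ (i.e.\ using only strong normality and $R_0\in R_vR_{v^*}$), and then quotes \cite[Lemma 2.1.1]{Z3} for closedness of the product; you instead exploit thinness, cancelling via $R_aR_{a^*}=R_{a^*}R_a=\{R_0\}$ to turn the two conjugation inclusions into the two opposite containments, and then verify $T^*T\subseteq T$ by hand. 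Your version is a special case but entirely sufficient here, since only thin relations are ever commuted past $O^\vartheta(S)$; the paper's version buys a statement reusable for arbitrary closed subsets at the cost of invoking an external lemma. In (iii) you also rerun the induction with $O_\vartheta(S)$ appended rather than reducing to $O_\vartheta(S)O^\vartheta(S)\subseteq T$ via the commutation and part (i) as the paper does, which has the minor advantage of not depending on (ii) at all. All steps you use (distributivity of the complex product over unions, associativity, $(UV)^*=V^*U^*$) are standard and available from the paper's preliminaries, so I see no gap.
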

\begin{proof}
Let $R=\bigcup_{x=0}^d (R_{x^{*}}R_x)$.

For (i), by Lemmas \ref{L;closedset}, \ref{L;Residue}, and Definition \ref{D;singularset} (i), it suffices to check that $R^m\subseteq T$ for any $m\in \mathbb{N}$. We work by induction. For any $0\leq i\leq d$, according to Definition \ref{D;singularset}, notice that $R_{i^*}R_i=R_0R_{i^*}R_iR_0\subseteq T$. So $R^1\subseteq T$. The base case is shown. For any $1<m\in \mathbb{N}$, assume that $R^{m-1}\subseteq T$. For any $R_i\in R^m$, there exist $0\leq j,k\leq d$ such that $R_j\in R$, $R_k\in R^{m-1}$, and $R_i\in R_jR_k$. As $R_j\in R$, there exists $0\leq \ell\leq d$ such that $R_j\in R_{\ell^{*}}R_\ell$. By the inductive hypothesis and Definition \ref{D;singularset} (ii), we have $R_i\in R_jR_k\subseteq R_0R_{\ell^*}R_\ell R_k\subseteq T$, which implies that $R^m\subseteq T$. (i) is shown.

For (ii), let $H\in \mathcal{C}$ and $K\in \mathcal{D}$. We claim that $HK=KH$. For any $R_u\in HK$, as $H^*=H$, there are $0\leq v, w\leq d$ such that $R_{v^*}\in H$, $R_w\in K$, and $R_u\in R_{v^*}R_w$. As $K\in \mathcal{D}$, note that $R_u\in R_{v^*}R_wR_0\subseteq R_{v^*}R_wR_vR_{v^*}\subseteq KR_{v^*}\subseteq KH$. So $HK\subseteq KH$. Similarly, we can check that $KH\subseteq HK$, which implies that $HK=KH$. The claim is shown. According to this claim, we have $KH\in \mathcal{C}$ by \cite[Lemma 2.1.1]{Z3}. As $O^\vartheta(S)\in \mathcal{D}$ and $O_\vartheta(S)\in \mathcal{C}$, we have $O^\vartheta(S)O_\vartheta(S)\in \mathcal{C}$. The first assertion is shown.

For the other assertion, it suffices to check Definition \ref{D;singularset} (ii). For any $R_a\in O_\vartheta(S)$, $R_b\in S$, and $R_c\in R_aR_{b^*}R_b$, note that $R_c\in R_aR_{b^*}R_b\subseteq O_\vartheta(S)O^\vartheta(S)=O^\vartheta(S)O_\vartheta(S)$ by Lemma \ref{L;Residue} and the claim. For any $R_\ell\in O^\vartheta(S)O_\vartheta(S)$, since $O^\vartheta(S)O_\vartheta(S)\in \mathcal{C}$, also note that $R_cR_\ell\subseteq O^\vartheta(S)O_\vartheta(S)$, which implies that $R_aR_{b^*}R_bR_\ell\subseteq O^\vartheta(S)O_\vartheta(S)$. (ii) thus follows.

For (iii), by the claim of (ii), it suffices to show that $O_\vartheta(S)O^\vartheta(S)\subseteq T$. For any $R_r\in O_\vartheta(S)O^\vartheta(S)$, there exist $0\leq s, t\leq d$ such that $R_s\in O_\vartheta(S)$, $R_t\in O^\vartheta(S)$, and $R_r\in R_sR_t$. As $T$ is a singular subset of $S$, we have $R_r\in R_sR_t=R_sR_0R_0R_t\subseteq T$ by (i). So
$O^\vartheta(S)O_\vartheta(S)=O_\vartheta(S)O^\vartheta(S)\subseteq T$ by the claim of (ii). We are done.
\end{proof}
\begin{rem}\label{R;example}
Lemma \ref{L;singularsubset} (ii) and (iii) tell us that $O^\vartheta(S)O_\vartheta(S)$ is the singular subset of $S$ with the smallest cardinality. By Definition \ref{D;singularset}, $S$ itself is the singular subset of $S$ with the largest cardinality.
\end{rem}
\begin{eg}\label{E;singularset}
A singular subset of $S$ may not be a closed subset of $S$. Assume that $S$ is the scheme of order 6, No. 6 in \cite{HM}. Then $S=\{R_0, R_1, R_2, R_3\}$, where $O^\vartheta(S)=O_\vartheta(S)=\{R_0, R_1\}$ and $R_{2^*}=R_3$. In this case, all singular subsets of $S$ are exactly $\{R_0, R_1\}$, $\{R_0, R_1, R_2\}$, $\{R_0, R_1, R_3\}$, and $\{R_0, R_1, R_2, R_3\}$. Notice that $\{R_0, R_1, R_2\}\notin \mathcal{C}$ and $\{R_0, R_1, R_3\}\notin\mathcal{C}$. They are counterexamples.
\end{eg}
The following propositions may have independent interests.
\begin{prop}\label{P;interest}
Assume that $T\in \mathcal{C}$. Then $T$ is a singular subset of $S$ if and only if $O^\vartheta(S)\cup O_\vartheta(S)\subseteq T$.
\end{prop}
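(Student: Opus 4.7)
The plan is to give a short two-direction argument that just repackages the definitions together with Lemma \ref{L;Residue} and Lemma \ref{L;singularsubset}.

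For the forward direction, assume $T$ is a singular subset of $S$. Condition (i) of Definition \ref{D;singularset} already gives $O_\vartheta(S)\subseteq T$, and Lemma \ref{L;singularsubset} (i) gives $O^\vartheta(S)\subseteq T$; combining these yields $O^\vartheta(S)\cup O_\vartheta(S)\subseteq T$. No further work is needed here.

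For the converse, assume $T\in \mathcal{C}$ and $O^\vartheta(S)\cup O_\vartheta(S)\subseteq T$. Condition (i) of Definition \ref{D;singularset} is immediate. To verify condition (ii), fix $R_x\in O_\vartheta(S)$, $R_y\in S$ and $R_z\in T$. The key observation is that by Lemma \ref{L;Residue} we have $R_{y^{*}}R_y\subseteq O^\vartheta(S)$, and by hypothesis $O^\vartheta(S)\subseteq T$; together with $R_x\in O_\vartheta(S)\subseteq T$ and $R_z\in T$, all three factors lie in $T$. Since $T\in \mathcal{C}$ is closed under complex multiplication ($T^2\subseteq T$, and hence $T^m\subseteq T$ for every $m\in \mathbb{N}$ by Lemma \ref{L;closedset}), we conclude $R_xR_{y^{*}}R_yR_z\subseteq T\cdot T\cdot T\subseteq T$, as required.

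I do not expect any real obstacle: once the forward direction is handed to us by Lemma \ref{L;singularsubset} (i), the converse is essentially the remark that $R_{y^*}R_y$ is contained in the thin residue, so every factor in Definition \ref{D;singularset} (ii) already lies in $T$ and we can absorb the product by closedness.
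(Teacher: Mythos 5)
Your proof is correct and follows essentially the same route as the paper: the forward direction is Definition \ref{D;singularset} (i) plus Lemma \ref{L;singularsubset} (i), and the converse reduces Definition \ref{D;singularset} (ii) to the observation that $R_{y^*}R_y\subseteq O^\vartheta(S)\subseteq T$ (via Lemma \ref{L;Residue}) so that the whole product is absorbed by closedness of $T$. The only cosmetic difference is that the paper bounds $R_xR_{y^*}R_y$ by $\langle O^\vartheta(S)\cup O_\vartheta(S)\rangle\subseteq T$ in one step, whereas you place each factor in $T$ separately; both are valid.
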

\begin{proof}
One direction is clear by Definition \ref{D;singularset} (i) and Lemma \ref{L;singularsubset} (i). For the other direction, as $O^\vartheta(S)\cup O_\vartheta(S)\subseteq T$, it suffices to check that Definition \ref{D;singularset} (ii) holds for $T$. For any $R_x\in O_\vartheta(S)$, $R_y\in S$, and $R_z\in T$, by Lemmas \ref{L;closedset} and \ref{L;Residue}, note that $R_xR_{y^*}R_y\subseteq \langle O^\vartheta(S)\cup O_\vartheta(S)\rangle\subseteq T$ and $R_xR_{y^*}R_yR_z\subseteq T^2\subseteq T$, as desired.
\end{proof}
\begin{prop}\label{P;complete}
Assume that $T$ is a singular subset of $S$. Then $T\in \mathcal{C}$ if and only if $R_{i^*}R_j\subseteq T$ for any $0\leq i\neq j\leq d$ and $R_i, R_j\in T$.
\end{prop}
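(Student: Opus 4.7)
My plan is to unpack the defining condition $T^*T \subseteq T$ of a closed subset and split it into the off-diagonal and diagonal cases. The forward direction is immediate: if $T \in \mathcal{C}$, then the definition already gives $R_{i^*}R_j \subseteq T$ for all $R_i, R_j \in T$, which is stronger than the stated condition obtained by further restricting to $i \neq j$.

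For the backward direction, I need to verify $R_{i^*}R_j \subseteq T$ for every pair $R_i, R_j \in T$. When $i \neq j$ this is exactly the hypothesis, so the entire content lies in the diagonal case $i = j$. Here I plan to exploit the singular subset assumption: Lemma \ref{L;singularsubset}(i) provides $O^\vartheta(S) \subseteq T$, and Lemma \ref{L;Residue} identifies $O^\vartheta(S)$ with $\langle \bigcup_{k=0}^d R_{k^*}R_k\rangle$, so in particular $R_{i^*}R_i \subseteq O^\vartheta(S) \subseteq T$. Combining the two cases yields $T^*T \subseteq T$, and since $T$ is non-empty (it contains $R_0 \in O_\vartheta(S)$), this establishes $T \in \mathcal{C}$.

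I do not expect any real obstacle; the argument is essentially a bookkeeping observation once the previous lemmas are available. The only subtlety worth flagging is that the proposition's hypothesis deliberately omits the diagonal pairs $i = j$, and the reason is precisely that singularity already forces $R_{i^*}R_i \subseteq T$ via the thin residue. In effect, the proposition says that for a singular subset the closedness condition collapses to the off-diagonal part of $T^*T \subseteq T$, which is a mildly surprising reduction and the real point of the statement.
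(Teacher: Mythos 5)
Your proposal is correct and matches the paper's argument exactly: the forward direction is immediate from the definition of a closed subset, and the backward direction handles the diagonal pairs via $R_{i^*}R_i \subseteq O^\vartheta(S) \subseteq T$ using Lemma \ref{L;Residue} and Lemma \ref{L;singularsubset}(i), which are precisely the two lemmas the paper cites. No differences worth noting.
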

\begin{proof}
One direction is clear. The other direction is from Lemmas \ref{L;Residue} and \ref{L;singularsubset} (i).
\end{proof}
For any $v\in \F S$, if $v=\sum_{i=0}^d c_i \overline{A_i}$, where $c_i\in \F$ for any $0\leq i\leq d$, recall that $\mathrm{Supp}(v)=\{\overline{A_i}\in\F S: c_i\neq 0\}$ and $U(v)=\{R_i\in S: \overline{A_i}\in \mathrm{Supp}(v)\}$. For our purpose, we need the following lemma.
\begin{lem}\label{L;p-valenced}
Let $T$ be a singular subset of $S$ and $U_T=\{0\leq i\leq d: R_i\in T\}$. Define
$v_T=\sum_{i\in U_T}\overline{A_i}\in \F S$. If $p\mid k_i$ for any $R_i\in S\setminus O_\vartheta(S)$, then $\langle v_T\rangle_\F$ is a trivial $\F S$-submodule of the regular $\F S$-module.
\end{lem}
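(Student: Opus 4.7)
The plan is to check, for each $0\leq j\leq d$, the identity $\overline{A_j}v_T=\overline{k_j}v_T$, splitting the argument into the cases $R_j\in O_\vartheta(S)$ and $R_j\in S\setminus O_\vartheta(S)$.

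For $R_j\in O_\vartheta(S)$, Definition \ref{D;singularset} (ii) specialised with $R_y=R_0$ gives $R_xR_z\subseteq T$ for every $R_x\in O_\vartheta(S)$ and every $R_z\in T$, so in particular $R_jT\subseteq T$. Lemma \ref{L;Intersectionnumber} (vi) forces the map $R_i\mapsto R_{ji}$ (where $R_jR_i=\{R_{ji}\}$) to be a bijection on $S$, so the inclusion must be an equality $R_jT=T$. Expanding $\overline{A_j}v_T=\sum_{R_i\in T}\overline{A_{ji}}$ then yields $v_T=\overline{k_j}v_T$, since $k_j=1$.

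For $R_j\in S\setminus O_\vartheta(S)$ the hypothesis forces $p\mid k_j$, so it suffices to prove $\overline{A_j}v_T=0$. Set $T_0=O^\vartheta(S)O_\vartheta(S)$, which is a closed subset of $S$ by Lemma \ref{L;singularsubset} (ii). The central structural step is the closure $T_0T\subseteq T$: Definition \ref{D;singularset} (ii) with $R_x=R_0$ gives $R_{y^*}R_yR_z\subseteq T$ for all $R_y\in S$ and $R_z\in T$, and iterating this along the description $O^\vartheta(S)=\bigcup_{m}(\bigcup_iR_{i^*}R_i)^m$ (Lemmas \ref{L;Residue}, \ref{L;closedset}) lifts it to $O^\vartheta(S)T\subseteq T$; joining with $O_\vartheta(S)T\subseteq T$ then gives $T_0T\subseteq T$. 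Now fix $(x,y)\in X\times X$ and set $N_j(x)=\{z\in X:(x,z)\in R_j\}$ and $G_y=\{z\in X:(z,y)\in R_i\text{ for some }R_i\in T\}$. The set $G_y$ is saturated under the $T_0$-equivalence relation on $X$ (where $z\sim z'$ iff the relation containing $(z,z')$ lies in $T_0$): if $z\sim z'$ via $R_\ell$ and the relation containing $(z,y)$ is $R_i\in T$, then the relation containing $(z',y)$ lies in $R_{\ell^*}R_i\subseteq T_0T\subseteq T$, so $z'\in G_y$. On the other hand, $N_j(x)$ lies inside a single $T_0$-equivalence class, because for any $z,z'\in N_j(x)$ the relation containing $(z,z')$ is in $R_{j^*}R_j\subseteq O^\vartheta(S)\subseteq T_0$ by Lemma \ref{L;Residue}. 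Consequently $N_j(x)\cap G_y$ equals either $N_j(x)$ or $\varnothing$, so $(A_jv_T)(x,y)=|N_j(x)\cap G_y|\in\{0,k_j\}$, which vanishes modulo $p$ because $p\mid k_j$; as this holds for every $(x,y)$, we conclude $\overline{A_j}v_T=0$.

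The main obstacle I anticipate is establishing the closure $T_0T\subseteq T$: the singular axiom directly supplies only the generators $R_{y^*}R_yR_z\subseteq T$, so the passage to $O^\vartheta(S)T\subseteq T$ needs an induction on word length using Lemmas \ref{L;closedset} and \ref{L;Residue}, and the remaining $O_\vartheta(S)$-factor must be absorbed by a separate (but easier) application of the singular axiom. Once this closure is in place, the rest of the argument reduces to the clean geometric observation that $R_{j^*}R_j\subseteq T_0$ confines each fibre $N_j(x)$ to one $T_0$-class, forcing $(A_jv_T)(x,y)$ to equal $0$ or $k_j$.
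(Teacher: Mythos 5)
Your proof is correct, and for the main case it takes a genuinely different route from the paper's. In the thin case ($R_j\in O_\vartheta(S)$) you and the paper do essentially the same thing: show that left multiplication by $\overline{A_j}$ permutes $\mathrm{Supp}(v_T)$. In the non-thin case the paper stays entirely at the level of intersection numbers: writing $\overline{A_a}v_T=\sum_b c_b\overline{A_b}$ with $c_b=\sum_{e\in U_T}\overline{p_{ae}^b}$, it observes that if some $e\in U_T$ has $p_{ae}^b>0$ then $R_{a^*}R_b\subseteq R_0R_{a^*}R_aR_e\subseteq T$, so the partial sum over $U_T$ is actually the full sum $\sum_{e=0}^d\overline{p_{ae}^b}=\overline{k_a}=0$. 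You instead argue entrywise on $X\times X$, showing that each entry $|N_j(x)\cap G_y|$ of the integer matrix $A_j\sum_{i\in U_T}A_i$ equals $0$ or $k_j$ because $N_j(x)$ sits in a single class of the equivalence relation on $X$ induced by the closed subset $T_0=O^\vartheta(S)O_\vartheta(S)$ while $G_y$ is a union of such classes. Both arguments are sound; yours is more geometric and makes the divisibility by $k_j$ visible at the level of points, while the paper's is shorter and never needs the closedness of $T_0$ or the (standard but not stated in the paper) fact that a closed subset induces an equivalence relation on $X$. One simplification you could make: your whole closure step $T_0T\subseteq T$ is more than you actually use. For the saturation argument you only need that the relation of $(z',y)$ lies in $T$ when the relation of $(z',z)$ lies in $R_{j^*}R_j$ and that of $(z,y)$ lies in $T$, and this is literally Definition \ref{D;singularset}~(ii) with $R_x=R_0$, $R_y=R_j$; the induction lifting the generators to all of $O^\vartheta(S)$, and the passage to $T_0$, can be dropped entirely (which also removes the dependence on Lemma \ref{L;singularsubset}~(ii)).
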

\begin{proof}
It suffices to check that $\overline{A_a}v_T=\overline{k_a}v_T$ for any given $0\leq a\leq d$. According to the hypotheses, for any $R_i\in S$, we have $\overline{k_i}=1$ or $\overline{k_i}=0$. We thus have two cases.
\begin{enumerate}[\text{Case} 1:]
\item $\overline{k_a}=1$.
\end{enumerate}
In this case, for any $\overline{A_b}\in\mathrm{Supp}(v_T)$, according to Lemma \ref{L;Intersectionnumber} (v), (vi), and $(2.1)$, there exists $0\leq c\leq d$ such that $\overline{A_c}=\overline{A_aA_b}$. Since $k_a=1$, by Lemma \ref{L;Intersectionnumber} (v) and Definition \ref{D;singularset} (ii), $R_c\in R_aR_b=R_aR_0R_b=R_aR_{a^*}R_aR_b\subseteq T,$ which implies that $\overline{A_c}=\overline{A_aA_b}\in \mathrm{Supp}(v_T)$ by the definition of $v_T$. So $\{\overline{A_aA_b}: R_b\in U(v_T)\}\subseteq \mathrm{Supp}(v_T)$. As $\overline{A_a}$ is an invertible matrix, for any $\overline{A_x}, \overline{A_y}\in \mathrm{Supp}(v_T)$, we have $\overline{A_aA_x}=\overline{A_aA_y}$ if and only if $x=y$. Therefore $\mathrm{Supp}(\overline{A_a}v_T)=\{\overline{A_aA_b}: R_b\in U(v_T)\}=\mathrm{Supp}(v_T)$.
By the definition of $v_T$, we have $\overline{A_a}v_T=v_T$ as $\bigcup_{i=0}^d\{\overline{A_i}\}$ is an $\F$-basis of $\F S$.
\begin{enumerate}[\text{Case} 2:]
\item $\overline{k_a}=0$.
\end{enumerate}
In this case, write $\overline{A_a}v_T=\sum_{b=0}^dc_b\overline{A_b}$, where $c_b=\sum_{e\in U_T}\overline{p_{ae}^b}\in\F$ for any $0\leq b\leq d$. For any $0\leq b\leq d$, if there is no $e\in U_T$ such that $p_{ae}^b>0$, then $c_b=0$. If there is some $e\in U_T$ such that $p_{ae}^b>0$, we have $R_{a^*}R_b\subseteq R_{a^*}R_aR_e=R_0R_{a^*}R_aR_e\subseteq T$ as $R_e\in T$ and $T$ is a singular subset of $S$. For any $0\leq u\leq d$, if $p_{au}^b>0$, by Lemma \ref{L;Intersectionnumber} (iv), we have $p_{a^*b}^u>0$, which implies that $R_u\in R_{a^*}R_b\subseteq T$. So we have $u\in U_T$. By Lemma \ref{L;Intersectionnumber} (ii),
$$ c_b=\sum_{e\in U_T}\overline{p_{ae}^b}=\sum_{e=0}^d\overline{p_{ae}^b}=\overline{k_a}=0.$$
Therefore we deduce that $\overline{A_a}v_T=0$. The lemma follows by all listed cases.
\end{proof}
We deduce some corollaries. They may have independent interests.
\begin{cor}\label{C;p-valenced}
If $p\mid k_i$ for any $R_i\in S\setminus O_\vartheta(S)$, then $S$ is a $p$-transitive scheme only if $S=O^\vartheta(S)O_\vartheta(S)$.
\end{cor}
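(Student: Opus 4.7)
The plan is to exploit the singular subset $T := O^\vartheta(S)O_\vartheta(S)$ provided by Lemma \ref{L;singularsubset}(ii) to manufacture a trivial $\F S$-submodule and then force it to coincide with $\langle \overline{J}\rangle_\F$, which will squeeze $T$ up to all of $S$.

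First I would set $T := O^\vartheta(S)O_\vartheta(S)$ and note, via Lemma \ref{L;singularsubset}(ii), that $T$ is a singular subset of $S$. Since the hypothesis $p\mid k_i$ for every $R_i\in S\setminus O_\vartheta(S)$ is exactly the hypothesis of Lemma \ref{L;p-valenced}, that lemma produces a trivial $\F S$-submodule $\langle v_T\rangle_\F$ of the regular $\F S$-module, where $v_T=\sum_{i\in U_T}\overline{A_i}$. Observe that $v_T\neq 0$ because $R_0\in O_\vartheta(S)\subseteq T$, so $0\in U_T$.

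Next I would invoke the $p$-transitivity hypothesis through Lemma \ref{L;allone}: the unique trivial $\F S$-submodule of the regular $\F S$-module is $\langle \overline{J}\rangle_\F$. Hence $\langle v_T\rangle_\F = \langle \overline{J}\rangle_\F$, so $v_T = c\overline{J}$ for some $c\in \F$; the nonvanishing of $v_T$ gives $c\neq 0$. Comparing the coefficients of the two sides against the $\F$-basis $\bigcup_{i=0}^d\{\overline{A_i}\}$ of $\F S$, one reads off $U_T=\{0,1,\ldots,d\}$, i.e.\ $T=S$, which is the required identity $S=O^\vartheta(S)O_\vartheta(S)$.

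There is no real obstacle here: all the heavy lifting has been done in Lemmas \ref{L;singularsubset}(ii), \ref{L;p-valenced}, and \ref{L;allone}, and the only step left is the matching of supports on the $\F$-basis of $\F S$. The one point to be slightly careful about is ensuring $v_T\neq 0$, which is immediate from $R_0\in T$, and the observation that $T$ is a \emph{singular} subset (not merely a closed subset) so that Lemma \ref{L;p-valenced} applies.
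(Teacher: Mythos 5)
Your proof is correct and follows essentially the same route as the paper: both hinge on Lemma \ref{L;singularsubset}(ii) and Lemma \ref{L;p-valenced} to produce a trivial submodule from the singular subset $O^\vartheta(S)O_\vartheta(S)$, and then invoke $p$-transitivity. The only cosmetic difference is that you compare $v_T$ with $\overline{J}$ coefficientwise via Lemma \ref{L;allone}, whereas the paper argues that $S$ can have only one singular subset and then cites Remark \ref{R;example} (smallest singular subset equals largest); both are sound.
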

\begin{proof}
Assume that $S$ is a $p$-transitive scheme. By Definition \ref{D;p-transitive} and Lemma \ref{L;p-valenced}, notice that $S$ has precisely one singular subset, which forces that $S=O^\vartheta(S)O_\vartheta(S)$ by Remark \ref{R;example}. The corollary is established.
\end{proof}
\begin{eg}\label{E;Johnson}
Assume that $p\mid k_i$ for any $R_i\in S\setminus O_\vartheta(S)$. The converse of Corollary \ref{C;p-valenced} is not true. For a counterexample, assume that $p=2$ and $S$ is the scheme of order 6, No. 2 in \cite{HM}. We have $S=\{R_0, R_1, R_2\}$. Note that $O_\vartheta(S)=\{R_0, R_1\}$ and $k_2=4$.
Moreover, $O^\vartheta(S)=S$. Note that $\langle \overline{A_0}+\overline{A_1}\rangle_\F$ and $\langle\overline{J}\rangle_\F$ are mutually distinct trivial $\F S$-submodules of the regular $\F S$-module. So $S$ is not a $2$-transitive scheme.
\end{eg}
\begin{cor}\label{C;p-valenced3}
Assume that $p\mid k_i$ for any $R_i\in S\setminus O_\vartheta(S)$. If $S$ is a $p$-transitive scheme, then $S$ is the unique strongly normal closed subset of $S$ that contains $O_\vartheta(S)$.
\end{cor}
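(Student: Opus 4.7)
The plan is to combine Corollary \ref{C;p-valenced} with the basic characterisation of $O^\vartheta(S)$ as the intersection of all strongly normal closed subsets. By Corollary \ref{C;p-valenced}, the hypothesis that $S$ is a $p$-transitive scheme (together with the divisibility assumption on valencies) yields the equality $S=O^\vartheta(S)O_\vartheta(S)$. So it suffices to show that any $T\in \mathcal{D}$ with $O_\vartheta(S)\subseteq T$ already contains $O^\vartheta(S)O_\vartheta(S)$.

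To do this, first note that $S$ itself is a strongly normal closed subset of $S$ containing $O_\vartheta(S)$, so existence is immediate; the content is the uniqueness. Suppose $T\in \mathcal{D}$ satisfies $O_\vartheta(S)\subseteq T$. Since $O^\vartheta(S)=\bigcap_{K\in\mathcal{D}}K$ by definition, we automatically have $O^\vartheta(S)\subseteq T$. Because $\mathcal{D}\subseteq \mathcal{C}$, the subset $T$ is closed, so $T^2\subseteq T$. Combining these inclusions gives
\begin{equation*}
O^\vartheta(S)O_\vartheta(S)\subseteq T\cdot T = T^2\subseteq T.
\end{equation*}

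Finally, applying Corollary \ref{C;p-valenced} to rewrite the left-hand side as $S$, we obtain $S\subseteq T\subseteq S$, whence $T=S$, proving uniqueness. There is no real obstacle here: everything follows formally once one invokes Corollary \ref{C;p-valenced} to collapse $O^\vartheta(S)O_\vartheta(S)$ down to $S$, and then uses the defining minimality of $O^\vartheta(S)$ in $\mathcal{D}$ together with closedness of $T$ to absorb the complex product into $T$.
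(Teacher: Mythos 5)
Your proposal is correct, and its overall skeleton matches the paper's: both invoke Corollary \ref{C;p-valenced} to obtain $S=O^\vartheta(S)O_\vartheta(S)$, then show that any $T\in\mathcal{D}$ containing $O_\vartheta(S)$ must contain $O^\vartheta(S)O_\vartheta(S)$, and conclude $T=S$. The difference lies in how the crucial containment $O^\vartheta(S)O_\vartheta(S)\subseteq T$ is justified. The paper routes this through the singular-subset machinery: it applies Proposition \ref{P;interest} to recognize $T$ as a singular subset of $S$ and then cites Lemma \ref{L;singularsubset} (iii). You instead give a direct, more elementary derivation: since $T\in\mathcal{D}$, the definition of $O^\vartheta(S)$ as $\bigcap_{K\in\mathcal{D}}K$ gives $O^\vartheta(S)\subseteq T$, and then monotonicity of the complex product together with $T^2\subseteq T$ (valid because $T\in\mathcal{C}$) yields $O^\vartheta(S)O_\vartheta(S)\subseteq T^2\subseteq T$. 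Your argument is self-contained and avoids two auxiliary results, which is a genuine simplification for this corollary in isolation; the paper's phrasing, on the other hand, keeps the statement aligned with the singular-subset framework that organizes the whole section (and Lemma \ref{L;singularsubset} (iii) is doing essentially your computation in greater generality, for singular subsets that need not be closed). Both proofs are complete and correct.
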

\begin{proof}
Let $O_\vartheta(S)\subseteq T\in \mathcal{D}$. By the definition of $O^\vartheta(S)$, note that $O^\vartheta(S)\subseteq T$. By Proposition \ref{P;interest}, observe that $T$ is a singular subset of $S$. According to Lemma \ref{L;singularsubset} (iii), we have $O^\vartheta(S)O_\vartheta(S)\subseteq T\subseteq S$. Since $S$ is a $p$-transitive scheme, the corollary thus follows by Corollary \ref{C;p-valenced}.
\end{proof}
\begin{cor}\label{C;p-valenced2}
Assume that $p\mid k_i$ for any $R_i\in S\setminus O_\vartheta(S)$. If $p\nmid |X|$, then we have $S= O^\vartheta(S)O_\vartheta(S)$. In particular, $S$ is the unique strongly normal closed subset of $S$ that contains $O_\vartheta(S)$.
\end{cor}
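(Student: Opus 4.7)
The plan is to chain together the three results immediately preceding the corollary. The hypothesis is twofold: $p\mid k_i$ for every non-thin relation $R_i$, and $p\nmid |X|$. The second hypothesis is exactly the input to the middle assertion of Lemma \ref{L;allone}, which directly yields that $S$ is a $p$-transitive scheme. This turns the hypothesis of the present corollary into the hypothesis of Corollary \ref{C;p-valenced}.

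Having secured that $S$ is a $p$-transitive scheme, I can feed it into Corollary \ref{C;p-valenced}, whose conclusion is exactly the first desired equality $S=O^\vartheta(S)O_\vartheta(S)$. No recomputation of singular subsets is needed at this stage, since that work has already been done in the proof of Corollary \ref{C;p-valenced} via Lemma \ref{L;p-valenced} and Remark \ref{R;example}.

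For the ``in particular'' clause, the same $p$-transitivity, together with the divisibility hypothesis, is exactly what Corollary \ref{C;p-valenced3} consumes in order to conclude that $S$ is the unique strongly normal closed subset of $S$ containing $O_\vartheta(S)$. So the second assertion is simply an invocation of Corollary \ref{C;p-valenced3}.

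There is no genuine obstacle here: the corollary is an assembly of three named results, and the only thing to verify is that each hypothesis transfers cleanly. The single substantive step is recognising that $p\nmid |X|$ is the bridge which converts the raw valency hypothesis into $p$-transitivity via Lemma \ref{L;allone}; once that bridge is crossed, Corollaries \ref{C;p-valenced} and \ref{C;p-valenced3} supply the two conclusions with no further work.
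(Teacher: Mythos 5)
Your proof is correct and follows exactly the same route as the paper: Lemma \ref{L;allone} converts $p\nmid |X|$ into $p$-transitivity, and then Corollaries \ref{C;p-valenced} and \ref{C;p-valenced3} deliver the two conclusions. Nothing further is needed.
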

\begin{proof}
By Lemma \ref{L;allone}, the assumption $p\nmid |X|$ implies that $S$ is a $p$-transitive scheme. The desired corollary thus follows by Corollaries \ref{C;p-valenced} and \ref{C;p-valenced3}.
\end{proof}
We now work on the quasi-thin schemes. We first offer a known result.
\begin{lem}\label{L;quasi-thincomputation}\cite[Lemma 3.1]{H3}
If $R_a, R_b\in S$ and $k_a=k_b=2$, then precisely one of the following matrix equalities holds.
\begin{enumerate}[(i)]
\item [\em (i)] $A_aA_b=2A_u+2A_v$, where $0\leq u\neq v\leq d$ and $k_u=k_v=1$;
\item [\em (ii)] $A_aA_b=2A_u$, where $0\leq u\leq d$ and $k_u=2$;
\item [\em (iii)] $A_aA_b=2A_u+A_v$, where $0\leq u\neq v\leq d$, $k_u=1$, and $k_v=2$;
\item [\em (iv)] $A_aA_b=A_u+A_v$, where $0\leq u\neq v\leq d$ and $k_u=k_v=2$;
\item [\em (v)] $A_aA_b=A_u$, where $0\leq u\leq d$ and $k_u=4$.
\end{enumerate}
\end{lem}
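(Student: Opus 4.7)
The plan is to read off all constraints on the nonzero coefficients in $A_aA_b = \sum_{k=0}^d p_{ab}^k A_k$ from equation~(2.1) and then enumerate the solutions. Write $P = \{0 \le u \le d : p_{ab}^u > 0\}$ for the support.

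First I would extract three numerical constraints. Lemma~\ref{L;Intersectionnumber}~(iii) gives
$$\sum_{u \in P} p_{ab}^u k_u = k_a k_b = 4.$$
Lemma~\ref{L;Intersectionnumber}~(v) gives $|R_a R_b| \le \gcd(k_a, k_b) = 2$, so $|P| \le 2$. Lemma~\ref{L;Intersectionnumber}~(iv) gives, for every $u \in P$,
$$k_u\, p_{ab}^u = k_a\, p_{ub^*}^a = 2\, p_{ub^*}^a,$$
and combined with $p_{ub^*}^a \le k_u$ from Lemma~\ref{L;Intersectionnumber}~(ii) this forces $p_{ab}^u \le 2$; moreover $k_u p_{ab}^u$ must be even, so $p_{ab}^u = 1$ implies $k_u$ is even.

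Then I would enumerate. If $|P| = 1$, say $P = \{u\}$, then $p_{ab}^u k_u = 4$ with $p_{ab}^u \in \{1, 2\}$; this yields $(p_{ab}^u, k_u) = (2, 2)$, which is case (ii), or $(1, 4)$, which is case (v). If $|P| = 2$, say $P = \{u, v\}$, then $p_{ab}^u k_u + p_{ab}^v k_v = 4$ with each coefficient in $\{1, 2\}$ and the parity restriction. Up to the labelling of $u$ and $v$, exactly three solutions survive: $(p_{ab}^u, p_{ab}^v) = (2, 2)$ with $k_u = k_v = 1$, which is case (i); $(p_{ab}^u, p_{ab}^v) = (2, 1)$ with $k_u = 1$ and $k_v = 2$, which is case (iii); and $(p_{ab}^u, p_{ab}^v) = (1, 1)$ with $k_u = k_v = 2$, which is case (iv). Mutual exclusivity of the five cases is immediate from $|P|$ and the valencies involved.

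The only genuinely technical point is securing the twin facts $p_{ab}^u \le 2$ and $2 \mid k_u p_{ab}^u$; both come from the transfer identity in Lemma~\ref{L;Intersectionnumber}~(iv) together with the valency bound $k_a = 2$. Once those are in hand the argument reduces to a short arithmetic enumeration, so I do not expect any real obstacle beyond keeping the parity condition in view when $p_{ab}^u = 1$.
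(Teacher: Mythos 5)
Your proposal is correct, but note that the paper itself offers no proof of this statement: it is imported verbatim as a citation of \cite[Lemma 3.1]{H3}, so there is nothing internal to compare against. What you have done is supply a self-contained derivation from the intersection-number identities already recorded in Lemma \ref{L;Intersectionnumber}, and it holds up. The three constraints are all correctly sourced: $\sum_{u}p_{ab}^u k_u=4$ from part (iii), $|R_aR_b|\le\gcd(k_a,k_b)=2$ from part (v), and $k_u p_{ab}^u = 2\,p_{ub^*}^a$ from part (iv), with $p_{ub^*}^a\le k_u$ coming from part (ii) applied as $\sum_{w}p_{uw}^a=k_u$. These give exactly the two facts you isolate as the technical core, namely $p_{ab}^u\in\{1,2\}$ and $2\mid k_u p_{ab}^u$, and the subsequent enumeration over $|P|\in\{1,2\}$ is exhaustive: the subcase $(p_{ab}^u,k_u)=(4,1)$ is killed by the bound $p_{ab}^u\le 2$, the mixed subcase $2k_u+k_v=4$ with $k_v$ odd is killed by the parity condition, and the five survivors are precisely (i)--(v). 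Mutual exclusivity is indeed immediate from $|P|$ together with the valencies. The one thing your write-up leaves implicit is that each listed case is attained (the lemma as stated only needs the disjunction, so this is not a gap); if you wanted the sharper statement that all five occur, you would need examples, which is where the citation to \cite{H3} still earns its keep.
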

\begin{lem}\label{L;corelemma}
Assume that $p=2$ and $S$ is a quasi-thin scheme. Let $\langle v\rangle_\F$ be a trivial $\F S$-submodule of the regular $\F S$-module. If we have $R_0\in U(v)$, then $U(v)$ is a singular subset of $S$.
\end{lem}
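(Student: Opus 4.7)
The plan is to verify both conditions of Definition~\ref{D;singularset} for $T = U(v)$. Condition~(i), that $O_\vartheta(S) \subseteq U(v)$, is immediate from Lemma~\ref{L;vectors}(ii): in the quasi-thin scheme $S$ with $p = 2$, the condition $p \nmid k_u$ is equivalent to $R_u \in O_\vartheta(S)$, so the hypothesis $R_0 \in U(v)$ yields both $O_\vartheta(S) \subseteq U(v)$ and $c_u = c_0$ on every thin $R_u$.

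For condition~(ii), fix $R_x \in O_\vartheta(S)$, $R_y \in S$ and $R_z \in U(v)$. Since $R_x$ is thin, Lemma~\ref{L;ResidueRadical} gives $R_xU(v) = U(v)$, so it suffices to prove $R_{y^*}R_yR_z \subseteq U(v)$. If $R_y$ is thin the claim is immediate since $R_{y^*}R_y = \{R_0\}$. If $k_y = 2$, I would apply Lemma~\ref{L;quasi-thincomputation} to $A_{y^*}A_y$: as $p_{y^*y}^0 = k_y = 2$, the coefficient of $A_0$ in $A_{y^*}A_y$ must equal $2$, which rules out cases (ii), (iv) and (v) of that lemma and leaves either $A_{y^*}A_y = 2A_0 + 2A_w$ with $k_w = 1$ (case (i)) or $A_{y^*}A_y = 2A_0 + A_w$ with $k_w = 2$ (case (iii)). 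In both, $R_{y^*}R_y = \{R_0, R_w\}$, and the problem reduces to showing $R_wR_z \subseteq U(v)$.

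When $k_w = 1$, a second application of Lemma~\ref{L;ResidueRadical} (with $R_w$ in place of $R_x$) finishes the argument. The genuine difficulty is the subcase $k_w = 2$. Here one has the identity $\overline{A_w}v = \overline{A_{y^*}A_y}v = \overline{A_{y^*}}\,\overline{A_y}v = \overline{A_{y^*}}(\overline{k_y}v) = 0$, using $\overline{k_y} = \overline{2} = 0$. Thin members of $R_wR_z$ already lie in $O_\vartheta(S) \subseteq U(v)$, so the remaining task is to prove $c_m \neq 0$ for every relation $R_m \in R_wR_z$ with $k_m = 2$. My approach is to expand $\overline{A_w}v = 0$ coefficientwise: terms $A_wA_i$ with $R_i$ thin are handled by a direct row-sum argument (giving either a single val-$2$ matrix of coefficient $1$, or a sum of two distinct thin matrices each of coefficient $1$), while for $R_i$ of valency $2$ one invokes Lemma~\ref{L;quasi-thincomputation}. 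Lemma~\ref{L;Intersectionnumber}(iv) is then the natural tool for transferring the resulting intersection-number identities between $p_{wi}^m$, $p_{w^*m}^i$ and $p_{mi^*}^w$ so that one can track which contributions survive modulo~$2$.

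The step I expect to be the main obstacle is extracting the individual conclusion $c_m \neq 0$ from $\overline{A_w}v = 0$, since that single equation on its own only supplies a weighted constraint on the $c_i$'s. To isolate $c_m$ one must combine it with the full trivial-module relations $\overline{A_a}v = \overline{k_a}v$ for additional val-$2$ $R_a$'s and exploit the rigid form of $A_aA_b$ when $k_a = k_b = 2$ dictated by Lemma~\ref{L;quasi-thincomputation}, particularly the fact that $R_w$ is symmetric (forced by $A_{y^*}A_y$ being a symmetric integer matrix). The remaining ingredients --- Lemma~\ref{L;ResidueRadical} for the thin reduction, Lemma~\ref{L;vectors} for the thin part of $v$, and the coefficient-of-$A_0$ case split in Lemma~\ref{L;quasi-thincomputation} --- are routine.
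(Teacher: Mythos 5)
Your reduction of condition (i) to Lemma~\ref{L;vectors}(ii), the use of Lemma~\ref{L;ResidueRadical} to strip off the thin factor $R_x$, the disposal of the cases $k_y=1$ and $R_{y^*}R_y=\{R_0,R_w\}$ with $k_w=1$, and the determination of the possible shapes of $A_{y^*}A_y$ are all correct. But the proposal stops short of a proof exactly where the real content lies: in the subcase $k_w=2$ you reduce to showing $R_wR_z\subseteq U(v)$, i.e.\ $c_m\neq 0$ for every $R_m\in R_wR_z$, and you then only describe a plan (``expand $\overline{A_w}v=0$ coefficientwise, combine with the other relations $\overline{A_a}v=\overline{k_a}v$, exploit Lemma~\ref{L;quasi-thincomputation}'') while explicitly conceding that extracting the individual conclusion $c_m\neq 0$ from the single weighted identity $\overline{A_w}v=0$ is an unresolved obstacle. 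That concession is well founded: the mechanism you have available --- the vanishing of the coefficient of $\overline{A_m}$ in $\overline{A_w}v$, namely $\sum_f c_f\overline{p_{wf}^m}=0$ --- produces, when $p_{wz}^m$ is odd, a \emph{second} relation $R_f\in U(v)$ with $p_{wf}^m>0$, and via Lemma~\ref{L;Intersectionnumber}(iv) this yields $R_{w^*}R_m\subseteq U(v)$, not $R_m\in U(v)$. So your bracketing $(R_{y^*}R_y)R_z$ pushes you one level further out rather than closing the argument.

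The paper's proof avoids this by using the other bracketing $R_{y^*}(R_yR_z)$: it case-splits on the form of $A_yA_z$ via Lemma~\ref{L;quasi-thincomputation}, and for each $R_s\in R_yR_z$ with $p_{yz}^s$ odd it uses $\overline{A_y}v=0$ to produce a partner $R_u\in U(v)$, $u\neq z$, with $p_{yu}^s>0$; since $|R_{y^*}R_s|\leq\gcd(k_y,k_s)\leq 2$ by Lemma~\ref{L;Intersectionnumber}(v), this forces $R_{y^*}R_s=\{R_z,R_u\}\subseteq U(v)$ (and when $p_{yz}^s=2$ a valency count gives $R_{y^*}R_s=\{R_z\}$ directly). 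The conclusion is then reached by one final application of Lemma~\ref{L;ResidueRadical}. The essential point is that the partner argument naturally proves containments of the form $R_{y^*}(\,\cdot\,)\subseteq U(v)$, which is precisely what the target $R_xR_{y^*}R_yR_z\subseteq U(v)$ requires, whereas your formulation asks for membership of the elements of $R_wR_z$ themselves. To complete your argument you would either need to switch to the paper's bracketing, or supply a genuinely new idea for isolating $c_m$; as written, the proof has a gap at its central step.
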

\begin{proof}
As $R_0\in U(v)$, by Lemma \ref{L;vectors} (ii), notice that $O_\vartheta(S)\subseteq U(v)$. So Definition \ref{D;singularset} (i) is checked. It suffices to check Definition \ref{D;singularset} (ii). As $S$ is a quasi-thin scheme, for any given $0\leq y\leq d$, we have $k_y=1$ or $k_y=2$. For any given $R_x\in O_\vartheta(S)$ and $R_z\in U(v)$, to check the desired containment $$R_xR_{y^*}R_yR_z\subseteq U(v),$$ we distinguish the following cases.
\begin{enumerate}[\text{Case} 1:]
\item $k_y=1$.
\end{enumerate}
In this case, according to Lemma \ref{L;Intersectionnumber} (v), note that $R_xR_z=\{R_f\}$, where $0\leq f\leq d$. By Lemma \ref{L;ResidueRadical}, notice that $R_f\in U(v)$. As $k_y=1$, by Lemma \ref{L;Intersectionnumber} (v) and (vi), $$R_xR_{y^*}R_yR_z=R_xR_0R_z=R_xR_z=\{R_f\}\subseteq U(v).$$
\begin{enumerate}[\text{Case} 2:]
\item $k_z=1$.
\end{enumerate}
In this case, we may assume that $k_y=2$ by Case $1$. Since $k_y=2$, note that $\overline{A_y}v=0.$
By Lemma \ref{L;Intersectionnumber} (v) and (vi), we have $R_yR_z=\{R_w\}$ and $p_{yz}^w=1$, where $0\leq w\leq d$. Since $R_z\in U(v)$, there exists $R_u\in U(v)$ such that $u\neq z$ and $p_{yu}^w>0$. Otherwise, notice that $R_w\in U(\overline{A_y}v)$, which contradicts the fact $\overline{A_y}v=0$. By Lemma \ref{L;Intersectionnumber} (iv), we have $p_{y^*w}^z>0$ and $p_{y^*w}^u>0$, which implies that $R_{y^*}R_w=\{R_z, R_u\}\subseteq U(v)$ by Lemma \ref{L;Intersectionnumber} (i) and (v). So we deduce that
$$ R_xR_{y^*}R_{y}R_z=R_xR_{y^*}R_w=R_x\{R_z, R_u\}\subseteq R_xU(v)=U(v),$$
where the rightmost equality is from Lemma \ref{L;ResidueRadical}.
\begin{enumerate}[\text{Case} 3:]
\item $k_y=k_z=2$ and $A_yA_z=2A_s+2A_t$, where $0\leq s\neq t\leq d$ and $k_s=k_t=1$.
\end{enumerate}
In this case, by $(2.1)$, observe that $p_{yz}^s>0$ and $p_{yz}^t>0$. So $R_yR_z=\{R_s, R_t\}$ by Lemma \ref{L;Intersectionnumber} (v). By Lemma \ref{L;Intersectionnumber} (iv), we have $p_{y^*s}^z>0$ and $p_{y^*t}^z>0$, which implies that $R_{y^*}R_s=\{R_z\}=R_{y^*}R_t$ by Lemma \ref{L;Intersectionnumber} (v). By Lemma \ref{L;ResidueRadical}, we deduce that $$R_xR_{y^*}R_yR_z=R_xR_{y^*}\{R_s, R_t\}=R_xR_z\subseteq R_xU(v)=U(v).$$
\begin{enumerate}[\text{Case} 4:]
\item $k_y=k_z=2$ and $A_yA_z=2A_s$, where $0\leq s\leq d$ and $k_s=2$.
\end{enumerate}
In this case, by $(2.1)$, observe that $R_yR_z=\{R_s\}$ and $p_{yz}^s=2$. As $k_z=k_s=2$, by Lemma \ref{L;Intersectionnumber} (iv), notice that $p_{y^*s}^z=p_{yz}^s=2$, which implies that $R_{y^*}R_s=\{R_z\}$ by Lemma \ref{L;Intersectionnumber} (i) and (iii). By Lemma \ref{L;ResidueRadical}, we have
$$R_xR_{y^*}R_{y}R_z=R_xR_{y^*}R_s=R_xR_z\subseteq R_xU(v)=U(v).$$
\begin{enumerate}[\text{Case} 5:]
\item $k_y=k_z=2$ and $A_yA_z=2A_s+A_t$, where $0\leq s\neq t\leq d$, $k_s=1$, and $k_t=2$.
\end{enumerate}
In this case, by $(2.1)$, notice that $p_{yz}^s=2$ and $p_{yz}^t=1$. So $R_yR_z=\{R_s, R_t\}$ by Lemma \ref{L;Intersectionnumber} (v). Since $p_{yz}^s=2$, by Lemma \ref{L;Intersectionnumber} (iv), we have $p_{y^*s}^z>0$, which implies that $R_{y^*}R_s=\{R_z\}$ by Lemma \ref{L;Intersectionnumber} (v). Since $k_y=2$, observe that $\overline{A_y}v=0$. As we have $p_{yz}^t=1$ and $R_z\in U(v)$, there exists $R_e\in U(v)$ such that $e\neq z$ and $p_{ye}^t>0$. Otherwise, note that $R_t\in U(\overline{A_y}v)$, which contradicts the fact $\overline{A_y}v=0$. By Lemma \ref{L;Intersectionnumber} (iv), we have $p_{y^*t}^z>0$ and $p_{y^*t}^e>0$, which implies that $R_{y^*}R_t=\{R_e, R_z\}$ by Lemma \ref{L;Intersectionnumber} (i) and (v). By Lemma \ref{L;ResidueRadical}, we can deduce that
$$R_xR_{y^*}R_{y}R_z=R_xR_{y^*}\{R_s, R_t\}=R_x\{R_e, R_z\}\subseteq R_xU(v)=U(v).$$
\begin{enumerate}[\text{Case} 6:]
\item $k_y=k_z=2$ and $A_yA_z=A_s+A_t$, where $0\leq s\neq t\leq d$ and $k_s=k_t=2$.
\end{enumerate}
In this case, by $(2.1)$, notice that $p_{yz}^s=p_{yz}^t=1$. So $R_yR_z=\{R_s, R_t\}$ by Lemma \ref{L;Intersectionnumber} (v). Since $k_y=2$, we have $\overline{A_y}v=0$. Since $p_{yz}^s=1$, $s\neq t$, and $R_z\in U(v)$, there exists $R_a\in U(v)$ such that $a\neq z$ and $p_{ya}^s>0$. Otherwise, observe that $R_s\in U(\overline{A_y}v)$, which contradicts the fact $\overline{A_y}v=0$. By Lemma \ref{L;Intersectionnumber} (iv), we have $p_{y^*s}^z>0$ and $p_{y^*s}^a>0$, which implies that $R_{y^*}R_s=\{R_a, R_z\}$ by Lemma \ref{L;Intersectionnumber} (i) and (v). As $p_{yz}^t=1$ and $t\neq s$, let $t$ play the role of $s$ in the proof of above five rows and note that $R_{y^*}R_t=\{R_b, R_z\}$, where $R_b\in U(v)$ and $b\neq z$. By Lemma \ref{L;ResidueRadical}, we can deduce that
$$R_xR_{y^*}R_{y}R_z=R_xR_{y^*}\{R_s, R_t\}=R_x(\{R_a, R_z\}\cup\{R_b, R_z\})\subseteq R_xU(v)=U(v).$$

By Lemma \ref{L;quasi-thincomputation} and all listed cases, Definition \ref{D;singularset} (ii) is checked. We are done.
\end{proof}
The following proposition may have its own interest.
\begin{prop}\label{P;notinterest}
Assume that $p=2$ and $S$ is a quasi-thin scheme. Let $\mathcal{U}$ denote $\{U(v)\subseteq S: R_0\in U(v),\ \overline{A_i}v=\overline{k_i}v\ \forall\ 0\leq i\leq d\}$. Let $\mathcal{S}$ be the set of all singular subsets of $S$. Then $\mathcal{U}=\mathcal{S}$.
\end{prop}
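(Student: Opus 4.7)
The plan is to establish the two inclusions $\mathcal{U}\subseteq \mathcal{S}$ and $\mathcal{S}\subseteq \mathcal{U}$ separately; in fact both are essentially immediate given the two preceding lemmas, so the proposition is a packaging statement rather than a new calculation.

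For $\mathcal{U}\subseteq \mathcal{S}$, I would just invoke Lemma \ref{L;corelemma}. If $U(v)\in \mathcal{U}$, then by the definition of $\mathcal{U}$ we have $R_0\in U(v)$ and $\overline{A_i}v=\overline{k_i}v$ for every $0\leq i\leq d$, so $\langle v\rangle_\F$ is a trivial $\F S$-submodule of the regular $\F S$-module whose support contains $R_0$. Lemma \ref{L;corelemma} then tells us directly that $U(v)$ is a singular subset of $S$, i.e. $U(v)\in \mathcal{S}$.

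For $\mathcal{S}\subseteq \mathcal{U}$, the key observation is that the divisibility hypothesis of Lemma \ref{L;p-valenced} holds automatically in our setting: since $p=2$ and $S$ is quasi-thin, every $R_i\in S\setminus O_\vartheta(S)$ has $k_i=2$, so $p\mid k_i$. Given $T\in \mathcal{S}$, I would therefore form the vector $v_T=\sum_{i\in U_T}\overline{A_i}\in \F S$ of Lemma \ref{L;p-valenced}, which yields $\overline{A_a}v_T=\overline{k_a}v_T$ for every $0\leq a\leq d$. By construction $U(v_T)=T$, and $R_0\in O_\vartheta(S)\subseteq T$ by Definition \ref{D;singularset}(i), so $R_0\in U(v_T)$. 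Hence $T=U(v_T)\in \mathcal{U}$.

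There is no serious obstacle here. The only content to check is that at $p=2$ in the quasi-thin case the hypothesis of Lemma \ref{L;p-valenced} is automatic, and that the support of $v_T$ contains $R_0$; both are one-line verifications. If anything counts as the main point, it is the conceptual one that the combinatorial objects (singular subsets) are parametrised exactly by the trivial $\F S$-submodules containing $\overline{J}$ in their support in the quasi-thin $p=2$ setting, which is what makes Corollary \ref{C;p-valenced} usable later in identifying $2$-transitivity with $S=O^\vartheta(S)O_\vartheta(S)$.
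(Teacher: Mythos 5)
Your proposal is correct and matches the paper's proof essentially verbatim: the paper also obtains $\mathcal{S}\subseteq\mathcal{U}$ from Lemma \ref{L;p-valenced} together with Definition \ref{D;singularset}(i), and $\mathcal{U}\subseteq\mathcal{S}$ from Lemma \ref{L;corelemma}. Your added check that the hypothesis of Lemma \ref{L;p-valenced} is automatic when $p=2$ and $S$ is quasi-thin is the right (implicit in the paper) verification.
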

\begin{proof}
By Lemma \ref{L;p-valenced} and Definition \ref{D;singularset} (i), observe that $\mathcal{S}\subseteq\mathcal{U}$. By Lemma \ref{L;corelemma}, we also have $\mathcal{U}\subseteq \mathcal{S}$. The desired proposition follows.
\end{proof}
To state the next lemma, we define $(\F S)^{\F S}=\langle \{v\in \F S: \overline{A_i}v=\overline{k_i}v\ \forall\ 0\leq i\leq d\}\rangle_\F.$ Notice that $\overline{J}\in (\F S)^{\F S}$.
\begin{lem}\label{L;smalllemma}
We have $(\F S)^{\F S}=\langle \{v\in \F S: R_0\in U(v),\  \overline{A_i}v=\overline{k_i}v\ \forall\ 0\leq i\leq d\}\rangle_\F.$
\end{lem}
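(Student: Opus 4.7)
The plan is that the inclusion ``$\supseteq$'' is immediate: every $v$ appearing in the generating set on the right satisfies $\overline{A_i}v=\overline{k_i}v$ for all $0\leq i\leq d$, so it is already in the generating set of $(\F S)^{\F S}$. All the content is in the reverse inclusion.

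For ``$\subseteq$'', I would take an arbitrary generator $v=\sum_{i=0}^d c_i\overline{A_i}$ of $(\F S)^{\F S}$ and argue by whether $c_0=0$. The key observation, already recorded just before the statement, is that $\overline{J}\in(\F S)^{\F S}$; moreover $\overline{J}=\sum_{i=0}^d\overline{A_i}$ so the coefficient of $\overline{A_0}$ in $\overline{J}$ is $1$, which means $R_0\in U(\overline{J})$ and $\overline{J}$ itself is a generator of the right-hand side. If $c_0\neq 0$, then $R_0\in U(v)$ and $v$ is directly a generator of the right-hand side, so there is nothing to prove. If $c_0=0$, I would observe that $v+\overline{J}$ is again $\F S$-invariant (the invariants form an $\F$-subspace) and has coefficient $c_0+1=1\neq 0$ on $\overline{A_0}$; in particular $v+\overline{J}\neq 0$ and $R_0\in U(v+\overline{J})$, so $v+\overline{J}$ too is a generator of the right-hand side. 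The identity
\[
v=(v+\overline{J})-\overline{J}
\]
then expresses $v$ as an $\F$-linear combination of two such generators, finishing the proof.

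There is no real obstacle here: the only thing to check is that the ``shifted'' vectors $v+\overline{J}$ and $\overline{J}$ genuinely belong to the generating set on the right, and this is settled by the single coefficient computation above, which is valid in every characteristic (since $1\neq 0$ in $\F$). The point of the lemma is organizational, letting one restrict attention in later arguments (notably Proposition \ref{P;notinterest}) to invariant vectors whose support contains $R_0$, where Lemma \ref{L;vectors}(ii) and Lemma \ref{L;ResidueRadical} are directly applicable.
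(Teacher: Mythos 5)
Your proof is correct and is essentially the paper's own argument: the author likewise notes that $\overline{J}$ is an invariant vector with $R_0\in U(\overline{J})$ and, for an invariant $w$ with $R_0\notin U(w)$, writes $w=\overline{J}-(\overline{J}-w)$, which is the same decomposition as your $v=(v+\overline{J})-\overline{J}$ up to rearrangement. Nothing further is needed.
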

\begin{proof}
For any $w\in (\F S)^{\F S}$, if $R_0\notin U(w)$, we have $w=x-y$, where $x=\overline{J}$ and $y=\overline{J}-w$. As $R_0\notin U(w)$, note that $R_0\in U(x)\cap U(y)$. The lemma thus follows.
\end{proof}
We use Lemma \ref{L;corelemma} to deduce the following corollary.
\begin{cor}\label{C;corecorollary}
Assume that $p=2$ and $S$ is a quasi-thin scheme. If we have $S=O^\vartheta(S)O_\vartheta(S)$, then $S$ is a $2$-transitive scheme.
\end{cor}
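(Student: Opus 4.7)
The plan is to show that under the hypothesis $S=O^\vartheta(S)O_\vartheta(S)$, the space $(\F S)^{\F S}$ coincides with $\langle \overline{J}\rangle_\F$; by Lemma \ref{L;allone} this is exactly the $2$-transitivity of $S$.

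First I would record a preliminary reduction. By Lemma \ref{L;singularsubset} (iii) and Remark \ref{R;example}, every singular subset of $S$ sits between $O^\vartheta(S)O_\vartheta(S)$ and $S$, so the hypothesis $S=O^\vartheta(S)O_\vartheta(S)$ forces $S$ to be the unique singular subset of $S$. Moreover, by Lemma \ref{L;smalllemma} it is enough to prove that every $v\in(\F S)^{\F S}$ with $R_0\in U(v)$ is a scalar multiple of $\overline{J}$.

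The main step is a parameter-sweep argument. Write $v=\sum_{i=0}^d c_i\overline{A_i}$ with $c_0\neq 0$ and fix an arbitrary $\alpha\in\F\setminus\{c_0\}$. Set $u_\alpha=\alpha\overline{J}-v$. Since $\overline{J}$ and $v$ both belong to $(\F S)^{\F S}$, so does $u_\alpha$. It is nonzero, because $u_\alpha=0$ would force $c_0=\alpha$, contradicting our choice of $\alpha$. The coefficient of $\overline{A_0}$ in $u_\alpha$ equals $\alpha-c_0\neq 0$, so $R_0\in U(u_\alpha)$. Proposition \ref{P;notinterest} then yields that $U(u_\alpha)$ is a singular subset of $S$, hence $U(u_\alpha)=S$ by the preliminary reduction. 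Consequently $\alpha-c_i\neq 0$ for every $0\leq i\leq d$, i.e.\ $c_i\neq\alpha$. As $\alpha$ ranges over $\F\setminus\{c_0\}$, this forces $c_i=c_0$ for every $i$, so $v=c_0\overline{J}\in\langle\overline{J}\rangle_\F$, as desired.

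I do not anticipate a serious obstacle, since the heavy lifting has already been carried out in Lemma \ref{L;corelemma} and Proposition \ref{P;notinterest}. The only nonroutine move is the parameter sweep over $\alpha$: rather than trying to identify the coefficients $c_i$ one by one via a case analysis in Lemma \ref{L;quasi-thincomputation}, sweeping $\alpha$ exploits the uniqueness of the singular subset to eliminate every value in $\F\setminus\{c_0\}$ simultaneously.
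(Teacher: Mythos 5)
Your proof is correct and takes essentially the same route as the paper's: the paper likewise reduces to the case $R_0\in U(v)$ via Lemma \ref{L;smalllemma}, invokes Lemma \ref{L;corelemma} (of which Proposition \ref{P;notinterest} is a repackaging) together with Lemma \ref{L;singularsubset} (iii) to force $U(v)=S$, and then perturbs $v$ by $c_a\overline{J}$ for an offending index $a$ to reach a contradiction. Your sweep over all $\alpha\in\F\setminus\{c_0\}$ is just the same perturbation applied uniformly (with $\alpha=c_a$ recovering the paper's choice), so the two arguments coincide in substance.
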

\begin{proof}
By Lemma \ref{L;vectors} (iii), we may assume further that $S$ is not a thin scheme.

For any given trivial $\F S$-submodule $\langle v\rangle_\F$ of the regular $\F S$-module, assume that $v=\sum_{i=0}^dc_i\overline{A_i}$, where $c_i\in \F$. We first show that $v\in \langle \overline{J}\rangle_\F$. We distinguish two cases.
\begin{enumerate}[\text{Case} 1:]
\item $R_0\in U(v)$.
\end{enumerate}
In this case, by Lemma \ref{L;corelemma}, $U(v)$ is a singular subset of $S$. As $S= O^\vartheta(S) O_\vartheta(S)$, by Lemma \ref{L;singularsubset} (iii), we have $S=O^\vartheta(S) O_\vartheta(S)\subseteq U(v)\subseteq S,$ which forces that $U(v)=S$. Therefore $c_i\neq 0$ for any $0\leq i\leq d$.

We claim that $c_i=c_0$ for any $0\leq i\leq d$. By Lemma \ref{L;vectors} (ii), we have $c_j=c_0$ for any $R_j\in O_\vartheta(S)$. Suppose that there is some $0<a\leq d$ such that $R_a\in U(v)$, $k_a=2$, and $c_a\neq c_0$. Notice that $\langle w\rangle_\F$ is also a trivial $\F S$-submodule of the regular $\F S$-module, where $w=v+c_a\overline{J}$. Observe that $R_0\in U(w)$ as $c_a\neq c_0$. By Lemma \ref{L;corelemma}, note that $U(w)$ is a singular subset of $S$. As $S=O^\vartheta(S)O_\vartheta(S)$, by Lemma \ref{L;singularsubset} (iii), we deduce that
$S=O^\vartheta(S)O_\vartheta(S)\subseteq U(w)\subseteq S,$
which forces that $S=U(w)$. However, since $p=2$, we have $R_a\notin U(w)$, which implies that $R_a\notin U(w)=S$. This is an obvious contradiction. The claim is shown. By this claim, we have $c_i=c_0$ for any $0\leq i\leq d$. We thus deduce that $v\in \langle\overline{J}\rangle_\F$.
\begin{enumerate}[\text{Case} 2:]
\item $R_0\notin U(v)$.
\end{enumerate}
In this case, by Lemma \ref{L;smalllemma}, note that $v=x+y$, where we have $x,y\in \F S$ and $R_0\in U(x)\cap U(y)$. Moreover, both $\langle x\rangle_\F$ and $\langle y\rangle_\F$ are trivial $\F S$-submodules of the regular $\F S$-module. We have $v=x+y\in \langle \overline{J}\rangle_\F$ by Case $1$.

The corollary follows by all listed cases and Lemma \ref{L;allone}.
\end{proof}
We are now ready to prove Theorem \ref{T;A}.
\begin{proof}[Proof of Theorem \ref{T;A}]
If $p>2$, $S$ is a $p$-transitive scheme by Lemma \ref{L;vectors} (iii). If $p=2$, by Corollaries \ref{C;p-valenced} and \ref{C;corecorollary}, note that $S$ is a $2$-transitive scheme if and only if $S=O^\vartheta(S)O_\vartheta(S)$. The proof is now complete.
\end{proof}
We close this section by presenting some examples.
\begin{eg}\label{E;mainexample}
Assume that $p=2$. We give some examples of Theorem \ref{T;A}.
\begin{enumerate}[(i)]
\item Assume that $S$ is the scheme in Example \ref{E;singularset}. Observe that $S$ is a quasi-thin scheme with two non-thin relations. We also have $O^\vartheta(S)O_\vartheta(S)=O_\vartheta(S)\neq S$. Therefore $S$ is not a $2$-transitive scheme by Theorem \ref{T;A}.
\item Assume that $S$ is the quasi-thin scheme of order $6$, No. $5$ in \cite{HM}. Observe that $S=\{R_0, R_1, R_2, R_3\}$, where we have $O_\vartheta(S)=\{R_0, R_1\}$, $R_{2^*}R_2=\{R_0,R_2\}$, and $R_1R_2=\{R_3\}$. So $S=O^\vartheta(S)O_\vartheta(S)$. Therefore $S$ is a $2$-transitive scheme by Theorem \ref{T;A}.
\end{enumerate}
\end{eg}
\section{$p$-Transitive schemes with thin thin residue}
In this section, we completely determine all $p$-transitive schemes with thin thin residue. In particular, we finish the proof of Theorem \ref{T;B}. For our purpose, we set $S_{p'}=\{R_i\in S: p\nmid k_i\}$ and recall Definition \ref{D;singularset}. Note that $O_\vartheta(S)\subseteq S_{p'}$.

We first provide some preliminary results.
\begin{lem}\label{L;specialsingular}
If $O^\vartheta(S)\cup S_{p'}\subseteq T\in \mathcal{C}$, then $T$ is a singular subset of $S$.
\begin{proof}
According to the hypotheses, we have $O^\vartheta(S)\cup O_\vartheta(S)\subseteq T$, which implies the desired result by Proposition \ref{P;interest}.
\end{proof}
\end{lem}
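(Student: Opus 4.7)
The plan is to reduce the claim directly to Proposition \ref{P;interest}, which characterizes when a closed subset of $S$ is singular: $T\in\mathcal{C}$ is a singular subset if and only if $O^\vartheta(S)\cup O_\vartheta(S)\subseteq T$. Since $T$ is already assumed to be a closed subset, it suffices to verify that both $O^\vartheta(S)$ and $O_\vartheta(S)$ lie inside $T$.

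The first inclusion, $O^\vartheta(S)\subseteq T$, is immediate from the hypothesis. For the second, I would observe that every thin relation has valency $1$, and since $p$ is a (positive) prime we have $p\nmid 1$. Hence $O_\vartheta(S)=\{R_i\in S:k_i=1\}\subseteq\{R_i\in S:p\nmid k_i\}=S_{p'}$, and therefore $O_\vartheta(S)\subseteq S_{p'}\subseteq T$ by hypothesis. Combining both inclusions yields $O^\vartheta(S)\cup O_\vartheta(S)\subseteq T$.

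There is no real obstacle here; the only thing to notice is the trivial point that the thin radical is always contained in the $p'$-valenced part of $S$. Once that is in place, Proposition \ref{P;interest} closes the argument in a single step, producing the conclusion that $T$ is a singular subset of $S$.
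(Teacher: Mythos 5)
Your proof is correct and follows the paper's argument exactly: both reduce the claim to Proposition \ref{P;interest} via the observation that $O_\vartheta(S)\subseteq S_{p'}$ (which the paper records just before this lemma). You simply spell out the elementary reason ($p\nmid 1$) that the paper leaves implicit.
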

\begin{lem}\label{L;necessarypart}
If we have $O^\vartheta(S)\cup S_{p'}\subseteq T\in \mathcal{C}$, $V_T=\{0\leq i\leq d: R_i\in T\}$, and $w_T=\!\sum_{i\in V_T}\overline{A_i}\in \F S$, then $\langle w_T\rangle_\F$ is a trivial $\F S$-submodule of the regular $\F S$-module.
\end{lem}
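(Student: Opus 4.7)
The plan is to verify directly that $\overline{A_a} w_T = \overline{k_a} w_T$ for every $0 \leq a \leq d$. Expanding via $(2.1)$ gives $\overline{A_a} w_T = \sum_{c=0}^d \alpha_c \overline{A_c}$ with $\alpha_c = \sum_{b \in V_T} \overline{p_{ab}^c}$, and I would split the argument on whether or not $R_a \in T$. The workhorse throughout is Lemma \ref{L;Intersectionnumber}(iv), namely $k_c p_{ab}^c = k_a p_{c b^*}^a = k_b p_{a^* c}^b$, which converts each inequality of the form $p_{ab}^c > 0$ into the relational containments $R_c \in R_a R_b$, $R_b \in R_{a^*} R_c$, and $R_a \in R_c R_{b^*}$.

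Suppose first that $R_a \in T$. For $c \in V_T$, whenever $p_{ab}^c > 0$ the reformulation above combined with closure of $T$ gives $R_b \in R_{a^*} R_c \subseteq T^2 \subseteq T$, so every contributing $b$ already lies in $V_T$. I may therefore enlarge the range of summation to obtain $\alpha_c = \sum_{b=0}^d \overline{p_{ab}^c} = \overline{k_a}$ by Lemma \ref{L;Intersectionnumber}(ii). For $c \notin V_T$, no $b \in V_T$ can contribute, since $p_{ab}^c > 0$ would force $R_c \in R_a R_b \subseteq T$; hence $\alpha_c = 0$. Summing, $\overline{A_a} w_T = \overline{k_a} w_T$.

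Suppose instead that $R_a \notin T$. Since $S_{p'} \subseteq T$, this forces $p \mid k_a$, hence $\overline{k_a} = 0$ and the target is $\overline{A_a} w_T = 0$. For $c \in V_T$, a contributing $b \in V_T$ would give $R_a \in R_c R_{b^*} \subseteq T^2 \subseteq T$, a contradiction; so $\alpha_c = 0$. For $c \notin V_T$, if some $b \in V_T$ contributes then $R_c \in R_a R_b$, whence $R_{a^*} R_c \subseteq R_{a^*} R_a R_b \subseteq O^\vartheta(S)\, R_b \subseteq T^2 \subseteq T$, where $R_{a^*} R_a \subseteq O^\vartheta(S)$ comes from Lemma \ref{L;Residue} and $O^\vartheta(S) \cup \{R_b\} \subseteq T$ is part of the hypothesis. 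Consequently every $u$ with $p_{au}^c > 0$ satisfies $R_u \in R_{a^*} R_c \subseteq T$, so extending the summation once more gives $\alpha_c = \overline{k_a} = 0$. In every case $\overline{A_a} w_T = \overline{k_a} w_T$.

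The main obstacle is the final sub-case ($R_a \notin T$ and $c \notin V_T$): the identity $\overline{k_c}\,\alpha_c = \overline{k_a}\sum_b \overline{p_{c b^*}^a}$ coming directly from Lemma \ref{L;Intersectionnumber}(iv) is vacuous, because both $\overline{k_c}$ and $\overline{k_a}$ vanish. The trick used to bypass this is to substitute the hypothesis $O^\vartheta(S) \subseteq T$ for the singular-subset property invoked in Lemma \ref{L;p-valenced}; the inclusion $R_{a^*}R_a \subseteq O^\vartheta(S) \subseteq T$ then makes the ``extend the summation to the full index set $\{0,\ldots,d\}$'' strategy work in exactly the same fashion as there.
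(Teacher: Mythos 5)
Your proof is correct and follows essentially the same strategy as the paper's: expand $\overline{A_a}w_T$ coefficient by coefficient and, using Lemma \ref{L;Intersectionnumber}(iv) to transfer positivity of intersection numbers together with closure of $T$, show that each sum over $V_T$ either vanishes or can be extended to the full sum $\sum_{b=0}^d\overline{p_{ab}^c}=\overline{k_a}$ via Lemma \ref{L;Intersectionnumber}(ii). The only differences are organizational: the paper splits on $\overline{k_a}=0$ versus $\overline{k_a}\neq 0$ (rather than on $R_a\in T$ versus $R_a\notin T$) and routes the key containment $R_{a^*}R_aR_e\subseteq T$ through the singular-subset machinery of Section 3 (Lemma \ref{L;specialsingular} and Definition \ref{D;singularset}(ii)), whereas you obtain it directly from $R_{a^*}R_a\subseteq O^\vartheta(S)\subseteq T$ via Lemma \ref{L;Residue} --- the same underlying fact.
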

\begin{proof}
It suffices to check that $\overline{A_a}w_{T}=\overline{k_a}w_{T}$ for any given $0\leq a\leq d$. Let us write $\overline{A_a}w_{T}=\sum_{b=0}^dc_b\overline{A_b},$ where $c_b=\sum_{e\in V_T}\overline{p_{ae}^b}\in\F$ for any $0\leq b\leq d$. We distinguish two cases.
\begin{enumerate}[\text{Case} 1:]
\item $\overline{k_a}=0$.
\end{enumerate}
In this case, for any given $0\leq b\leq d$, if there is no $e\in V_T$ such that $p_{ae}^b>0$,
then $c_b=0$. If there is $e\in V_T$ such that $p_{ae}^b>0$, by Lemma \ref{L;specialsingular} and Definition \ref{D;singularset} (ii), we have $R_{a^*}R_b\subseteq R_0R_{a^*}R_aR_e\subseteq T$. For any $0\leq u\leq d$, if $p_{au}^b>0$, by Lemma \ref{L;Intersectionnumber} (iv), we have $p_{a^*b}^u>0$, which implies that $R_u\in R_{a^*}R_b\subseteq T$. So $u\in V_T$. By Lemma \ref{L;Intersectionnumber} (ii), we thus have
$$ c_b=\sum_{e\in V_T}\overline{p_{ae}^b}=\sum_{e=0}^d\overline{p_{ae}^b}=\overline{k_a}=0.$$
Therefore we deduce that $\overline{A_a}w_{T}=0$.
\begin{enumerate}[\text{Case} 2:]
\item $\overline{k_a}\neq 0$.
\end{enumerate}
In this case, note that $p\nmid k_a$. So $R_a\in S_{p'}\subseteq T$. For any given $0\leq b\leq d$, we claim that $b\in V_T$ if and only if there is some $e\in V_T$ such that $p_{ae}^b>0$. If there is some $e\in V_T$ such that $p_{ae}^b>0$, as $R_a, R_e\in T$ and $T\in \mathcal{C}$, we have $R_b\in R_aR_e\subseteq TT\subseteq T$. So $b\in V_T$. Conversely, if $b\in V_T$, as $R_a\in T$ and $T\in \mathcal{C}$, notice that $R_{a^*}R_b\subseteq T$. So there exists some $e\in V_T$ such that $p_{a^*b}^e>0$. By Lemma \ref{L;Intersectionnumber} (iv), we have $p_{ae}^b>0$. The claim is shown.

By this claim, we have $c_b=0$ if $b\notin V_T$. If $b\in V_T$, by this mentioned claim again, there exists $e\in V_T$ such that $p_{ae}^b>0$. For any $0\leq u\leq d$, if $p_{au}^b>0$, we have $p_{a^*b}^u>0$ by Lemma \ref{L;Intersectionnumber} (iv). We thus have $R_u\in R_{a^*}R_b\subseteq TT\subseteq T$ as $T\in \mathcal{C}$. So we have $u\in V_T$. According to Lemma \ref{L;Intersectionnumber} (ii), we can deduce that
$$c_b=\sum_{e\in V_T}\overline{p_{ae}^b}=\sum_{e=0}^d\overline{p_{ae}^b}=\overline{k_a}.$$
Therefore we have $\overline{A_a}w_{T}=\overline{k_a}w_T$. The lemma follows by all listed cases.
\end{proof}
We deduce the following corollaries. They may have independent interests.
\begin{cor}\label{C;necessarypart}
If $O^\vartheta(S)\subseteq O_\vartheta(S)$, then $S$ is a $p$-transitive scheme only if $S=\langle S_{p'}\rangle$.
\end{cor}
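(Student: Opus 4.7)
The plan is to argue by contrapositive using Lemma \ref{L;necessarypart}. Suppose that $S$ has thin thin residue but $S\neq \langle S_{p'}\rangle$; I will produce a trivial $\F S$-submodule of the regular module different from $\langle \overline{J}\rangle_\F$, contradicting $p$-transitivity via Lemma \ref{L;allone}.

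Set $T=\langle S_{p'}\rangle$. By definition $T\in\mathcal{C}$ and $S_{p'}\subseteq T$. Since $O_\vartheta(S)\subseteq S_{p'}$ (every thin relation has valency $1$, which is prime to $p$) and by hypothesis $O^\vartheta(S)\subseteq O_\vartheta(S)$, I get $O^\vartheta(S)\cup S_{p'}\subseteq T\in \mathcal{C}$. This is exactly the hypothesis of Lemma \ref{L;necessarypart}, so with $V_T=\{0\leq i\leq d:R_i\in T\}$ and $w_T=\sum_{i\in V_T}\overline{A_i}$, the one-dimensional subspace $\langle w_T\rangle_\F$ is a trivial $\F S$-submodule of the regular $\F S$-module.

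The key observation is that $w_T$ and $\overline{J}$ are $\F$-linearly independent whenever $T\neq S$. Indeed, $\overline{J}=\sum_{i=0}^d\overline{A_i}$ has every $\overline{A_i}$ appearing with coefficient $1$, while $w_T$ has coefficient $0$ on those $\overline{A_i}$ with $R_i\in S\setminus T$, and by assumption $S\setminus T\neq\varnothing$. Since $\bigcup_{i=0}^d\{\overline{A_i}\}$ is an $\F$-basis of $\F S$, this shows $\langle w_T\rangle_\F\neq \langle\overline{J}\rangle_\F$, so the regular $\F S$-module contains at least two distinct trivial submodules. By Lemma \ref{L;allone} this contradicts the assumption that $S$ is $p$-transitive.

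There is no real obstacle here: the verification that $O^\vartheta(S)\cup S_{p'}\subseteq \langle S_{p'}\rangle$ is immediate from the thin-thin-residue hypothesis, and Lemma \ref{L;necessarypart} does the entire heavy lifting. The only thing to double-check is that $T=\langle S_{p'}\rangle\neq S$ really forces $w_T$ and $\overline{J}$ to span different lines, but this is just linear independence of the adjacency basis. Hence $S$ being $p$-transitive forces $\langle S_{p'}\rangle=S$, as claimed.
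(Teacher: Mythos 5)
Your proposal is correct and is essentially the paper's own argument: both apply Lemma \ref{L;necessarypart} to the closed subset $\langle S_{p'}\rangle$ (which contains $O^\vartheta(S)\cup S_{p'}$ thanks to the thin-thin-residue hypothesis) and observe that if $\langle S_{p'}\rangle\neq S$ the resulting trivial submodule $\langle w_T\rangle_\F$ differs from $\langle\overline{J}\rangle_\F$, contradicting $p$-transitivity via Lemma \ref{L;allone}. The only cosmetic difference is that you phrase it as a contrapositive while the paper phrases it as uniqueness of the closed subset containing $S_{p'}$.
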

\begin{proof}
Assume that $S$ is a $p$-transitive scheme. Since $O^\vartheta(S)\subseteq O_\vartheta(S)$, by Definition \ref{D;p-transitive} and Lemma \ref{L;necessarypart}, there is a unique closed subset $T$ of $S$ such that $S_{p'}\subseteq T$. Note that $S_{p'}\subseteq\langle S_{p'}\rangle\subseteq S$. We have $S=\langle S_{p'}\rangle$ as $S\in \mathcal{C}$ and $\langle S_{p'}\rangle\in \mathcal{C}$. We are done.
\end{proof}
\begin{cor}\label{C;necessarypart3}
Assume that $O^\vartheta(S)\subseteq O_\vartheta(S)$. If $S$ is a $p$-transitive scheme, then $S$ is the unique closed subset of $S$ that contains $S_{p'}$.
\end{cor}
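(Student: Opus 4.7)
The plan is to reduce the statement directly to Corollary \ref{C;necessarypart} together with the minimality property of $\langle S_{p'}\rangle$ among closed subsets containing $S_{p'}$. Since the hypothesis $O^\vartheta(S)\subseteq O_\vartheta(S)$ is exactly what is needed to invoke Corollary \ref{C;necessarypart}, the $p$-transitivity of $S$ immediately yields $S=\langle S_{p'}\rangle$. I would begin the proof by recording this equality.

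Next, I would take an arbitrary closed subset $T\in\mathcal{C}$ with $S_{p'}\subseteq T$ and show $T=S$. Since $\langle S_{p'}\rangle$ is by definition the intersection of all closed subsets of $S$ containing $S_{p'}$, it is contained in every such $T$. Combining this with the first step gives $S=\langle S_{p'}\rangle\subseteq T\subseteq S$, so $T=S$. This establishes uniqueness.

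Alternatively, one can bypass Corollary \ref{C;necessarypart} and give a direct argument by re-running the key step from its proof. Any closed subset $T$ containing $S_{p'}$ automatically contains $O_\vartheta(S)$, and by the hypothesis $O^\vartheta(S)\subseteq O_\vartheta(S)\subseteq S_{p'}$ it also contains $O^\vartheta(S)$. Thus $O^\vartheta(S)\cup S_{p'}\subseteq T\in\mathcal{C}$, so Lemma \ref{L;necessarypart} supplies a trivial $\F S$-submodule $\langle w_T\rangle_\F$ of the regular $\F S$-module. By $p$-transitivity this submodule must coincide with $\langle\overline{J}\rangle_\F$, forcing $U(w_T)=S$ and hence $T=S$.

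There is no substantial obstacle here: the corollary is really a direct packaging of the information already extracted in Corollary \ref{C;necessarypart}. The only point to watch is to invoke the hypothesis $O^\vartheta(S)\subseteq O_\vartheta(S)$ so that Corollary \ref{C;necessarypart} (or, equivalently, Lemma \ref{L;necessarypart}) applies; once that is in place, the argument is a one-line minimality statement.
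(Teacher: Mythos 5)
Your proposal is correct and its main argument coincides with the paper's own proof: invoke Corollary \ref{C;necessarypart} to get $S=\langle S_{p'}\rangle$, then use the minimality of $\langle S_{p'}\rangle$ among closed subsets containing $S_{p'}$ to conclude $T=S$. The alternative route you sketch via Lemma \ref{L;necessarypart} is also valid but just unwinds the proof of Corollary \ref{C;necessarypart}, so there is nothing essentially different.
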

\begin{proof}
For any $T\in\mathcal{C}$, if $S_{p'}\subseteq T$, notice that $\langle S_{p'}\rangle\subseteq T$. By Corollary \ref{C;necessarypart}, we have $S=\langle S_{p'}\rangle\subseteq T\subseteq S$, which forces that $T=S$. This completes the proof.
\end{proof}
\begin{cor}\label{C;necessarypart2}
Assume that $O^\vartheta(S)\subseteq O_\vartheta(S)$. If we have $p\nmid |X|$, then $S=\langle S_{p'}\rangle$. In particular, $S$ is the unique closed subset of $S$ that contains $S_{p'}$.
\end{cor}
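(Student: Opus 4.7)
The plan is to reduce the statement directly to the two preceding corollaries via Lemma \ref{L;allone}. First, I would invoke Lemma \ref{L;allone}, which guarantees that the hypothesis $p\nmid |X|$ forces $S$ to be a $p$-transitive scheme, independently of any assumption on $O^\vartheta(S)$. This is the only place where the numerical hypothesis $p\nmid |X|$ enters the argument; after this step the hypothesis will have been fully exploited.

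With $p$-transitivity in hand and the assumption $O^\vartheta(S)\subseteq O_\vartheta(S)$ still active, I would then feed these two facts into Corollary \ref{C;necessarypart}, which immediately yields the first assertion $S=\langle S_{p'}\rangle$. For the second assertion, I would apply Corollary \ref{C;necessarypart3} under the same pair of hypotheses; that corollary says exactly that $S$ is then the unique closed subset of $S$ containing $S_{p'}$.

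There is essentially no obstacle: all of the real work was carried out in the lemmas and corollaries leading up to this statement, and the present corollary is a two-line deduction that packages the $p\nmid |X|$ case as a corollary of Corollary \ref{C;necessarypart} and Corollary \ref{C;necessarypart3}. The only thing to make sure of is that both of those corollaries are applied with the hypothesis $O^\vartheta(S)\subseteq O_\vartheta(S)$ preserved, which is immediate since that assumption is shared between the current statement and both cited corollaries.
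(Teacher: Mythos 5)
Your proposal is correct and matches the paper's proof exactly: the paper likewise uses Lemma \ref{L;allone} to deduce $p$-transitivity from $p\nmid |X|$ and then concludes by Corollaries \ref{C;necessarypart} and \ref{C;necessarypart3}.
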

\begin{proof}
According to Lemma \ref{L;allone}, note that the assumption $p\nmid |X|$ implies that $S$ is a $p$-transitive scheme. The corollary thus follows by Corollaries \ref{C;necessarypart} and \ref{C;necessarypart3}.
\end{proof}
\begin{eg}
In Corollary \ref{C;necessarypart2}, the assumption $O^\vartheta(S)\subseteq O_\vartheta(S)$ can not be removed. For a counterexample, assume that $p=2$ and $S$ is the scheme of order 5, No. 2 in \cite{HM}. We have $|X|=5$ and $S=\{R_0, R_1, R_2\}$, where $R_1^2=\{R_0, R_2\}$, $R_2^2=\{R_0, R_1\}$, and $k_1=k_2=2$. According to Lemma \ref{L;Residue}, we have $S=O^\vartheta(S)\nsubseteq O_\vartheta(S)$. Note that $S\neq O_\vartheta(S)=\langle S_{2'}\rangle$. Moreover, both $S$ and $O_\vartheta(S)$ contain $S_{2'}$.
\end{eg}
For any $v\in \F S$, if $v=\sum_{i=0}^d c_i \overline{A_i}$, where $c_i\in \F$ for any $0\leq i\leq d$, recall that $\mathrm{Supp}(v)=\{\overline{A_i}\in\F S: c_i\neq 0\}$ and $U(v)=\{R_i\in S: \overline{A_i}\in \mathrm{Supp}(v)\}$.
\begin{lem}\label{L;corecorelemma}
Assume that $O^\vartheta(S)\subseteq O_\vartheta(S)$. Let $\langle v\rangle_\F$ be a trivial $\F S$-submodule of the regular $\F S$-module. If we have $R_0\in U(v)$, then $\langle S_{p'}\rangle\subseteq U(v)$.
\end{lem}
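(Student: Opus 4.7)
My plan is to establish the statement by an induction on $m \in \mathbb{N}\cup\{0\}$ showing $(S_{p'})^m \subseteq U(v)$, from which $\langle S_{p'}\rangle \subseteq U(v)$ follows via Lemma \ref{L;closedset}. The base case $m=0$ is immediate since $(S_{p'})^0 = \{R_0\}$ and $R_0 \in U(v)$ by hypothesis; the case $m=1$ is the content of Lemma \ref{L;vectors} (ii).

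The heart of the argument is the identity $R_aU(v) = U(v)$ for every $R_a \in S_{p'}$. Once this is in hand, the induction is routine: any $R_\ell \in (S_{p'})^{m+1} = S_{p'}(S_{p'})^m$ lies in some product $R_aR_c$ with $R_a \in S_{p'}$ and $R_c \in (S_{p'})^m \subseteq U(v)$ by the inductive hypothesis, so $R_\ell \in R_aR_c \subseteq R_aU(v) = U(v)$.

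To prove $R_aU(v) = U(v)$, I first note that the inclusion $U(v) \subseteq R_aU(v)$ is almost tautological from $\overline{A_a}v = \overline{k_a}v$ together with $\overline{k_a} \neq 0$: expanding $\overline{A_a}v = \sum_k \bigl(\sum_{R_j \in U(v)} c_j \overline{p_{aj}^k}\bigr)\overline{A_k}$ via $(2.1)$, any $R_k \in U(v) = U(\overline{A_a}v)$ must come with some $R_j \in U(v)$ and $p_{aj}^k > 0$, giving $R_k \in R_aR_j \subseteq R_aU(v)$. Applying the same argument to $R_{a^*}$, which also lies in $S_{p'}$ by Lemma \ref{L;Intersectionnumber} (i), yields $U(v) \subseteq R_{a^*}U(v)$, hence $R_aU(v) \subseteq R_aR_{a^*}U(v)$.

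The main obstacle, and the only place where the hypothesis $O^\vartheta(S) \subseteq O_\vartheta(S)$ is used, is closing the circle by showing $R_aR_{a^*}U(v) \subseteq U(v)$. For this I observe that taking $i = a^*$ in the generating family $\bigcup_i R_{i^*}R_i$ of $O^\vartheta(S)$ from Lemma \ref{L;Residue} gives $R_aR_{a^*} \subseteq O^\vartheta(S)$, whence by the standing assumption $R_aR_{a^*} \subseteq O_\vartheta(S)$; Lemma \ref{L;ResidueRadical} then ensures $R_iU(v) = U(v)$ for each $R_i \in R_aR_{a^*}$, yielding $R_aR_{a^*}U(v) \subseteq U(v)$ and completing the key identity. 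Without the hypothesis $O^\vartheta(S) \subseteq O_\vartheta(S)$, the products $R_aR_{a^*}$ would in general only sit inside $O^\vartheta(S)$, and Lemma \ref{L;ResidueRadical} could not be applied, so this is precisely the step where the assumption does essential work.
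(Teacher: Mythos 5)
Your proof is correct, and while it shares the paper's overall skeleton (reduce to $(S_{p'})^m\subseteq U(v)$ via Lemma \ref{L;closedset} and induct on $m$), the mechanism of the inductive step is genuinely different and arguably cleaner. The paper argues by contradiction inside the induction: it picks $R_x\in (S_{p'})^m\setminus U(v)$ with $R_x\in R_yR_z$, and then must rule out the possibility that the coefficient of $\overline{A_x}$ in $\overline{A_y}v$ vanishes by cancellation; this is handled by showing, via the thinness of $R_{y^*}R_y$ and the action of thin relations on $v$, that all coefficients $c_\ell$ with $p_{y\ell}^x>0$ coincide, so the coefficient equals $c_z\overline{k_y}\neq 0$. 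You sidestep the cancellation problem entirely by working at the level of supports: the inclusion $U(v)\subseteq R_aU(v)$ needs only the easy direction (a nonzero output coefficient must arise from some nonzero contributing term), and the reverse containment is obtained by the sandwich $R_aU(v)\subseteq (R_aR_{a^*})U(v)=U(v)$, where $R_aR_{a^*}\subseteq O^\vartheta(S)\subseteq O_\vartheta(S)$ lets you invoke Lemma \ref{L;ResidueRadical}. In effect you prove that Lemma \ref{L;ResidueRadical} extends from $O_\vartheta(S)$ to all of $S_{p'}$ under the thin-thin-residue hypothesis, which makes the induction immediate and pinpoints where that hypothesis does its work, exactly as you say. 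In a full write-up you should make explicit the facts you use silently: $(S_{p'})^{m+1}=S_{p'}(S_{p'})^m$ and $R_a(R_{a^*}U(v))=(R_aR_{a^*})U(v)$, both instances of the associativity of the complex product recorded in Section 2, and the identity $(R_aR_{a^*})U(v)=\bigcup_{R_i\in R_aR_{a^*}}R_iU(v)$, which is immediate from the definition of the complex product and is what lets you conclude $(R_aR_{a^*})U(v)=U(v)$ from $R_iU(v)=U(v)$ for each thin $R_i\in R_aR_{a^*}$.
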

\begin{proof}
Assume that $v=\sum_{i=0}^dc_i\overline{A_i}$, where $c_i\in \F$ for any $0\leq i\leq d$.

By Lemma \ref{L;closedset}, it suffices to prove that $(S_{p'})^m\subseteq U(v)$ for any $m\in \mathbb{N}$. We work by induction. As $R_0\subseteq U(v)$, by Lemma \ref{L;vectors} (ii), note that $S_{p'}\subseteq U(v)$. The base case is checked. Assume further that $1<m\in \mathbb{N}$ and $(S_{p'})^{m-1}\subseteq U(v)$.

To get a contradiction, suppose that $(S_{p'})^m\nsubseteq U(v)$. Then there exists $0<x\leq d$ such that $R_x\in (S_{p'})^m$ and $R_x\notin U(v)$. So there are $0\leq y,z\leq d$ such that $R_y\in S_{p'}$, $R_z\in (S_{p'})^{m-1}$, and $R_x\in R_yR_z$. In particular, we have $\overline{k_y}\neq 0$ and $p_{yz}^x>0$. As $\langle v\rangle_\F$ is a trivial $\F S$-submodule of the regular $\F S$-module, we have
\begin{equation}
\overline{A_y}v=\overline{k_y}v\neq 0.
\end{equation}
Let us write $\overline{A_y}v=\sum_{a=0}^de_a\overline{A_a}$, where $e_a=\sum_{f=0}^dc_f\overline{p_{yf}^a}\in\F$ for any $0\leq a\leq d$. For any $0\leq u\leq d$, if $p_{yu}^x>0$, by Lemma \ref{L;Intersectionnumber} (iv), we have $p_{y^*x}^u>0$, which implies that $R_u\in R_{y^*}R_x\subseteq R_{y^*}R_yR_z$ as $p_{yz}^x>0$. Since $O^\vartheta(S)\subseteq O_\vartheta(S)$, by Lemma \ref{L;Intersectionnumber} (v), (vi), and $(2.1)$, we have $R_wR_z=\{R_u\}$ and $\overline{A_wA_z}=\overline{A_u}$, where $R_w\in O_\vartheta(S)$. By Lemma \ref{L;Intersectionnumber} (v), (vi), and $(2.1)$ again, for any $0\leq h\leq d$, note that $\overline{A_wA_h}\in(\bigcup_{i=0}^d\{\overline{A_i}\})\setminus\{\overline{A_u}\}$ if $h\neq z$.
As $\langle v\rangle_\F$ is a trivial $\F S$-submodule of the regular $\F S$-module, we thus have
\begin{align}
\sum_{i=0}^d c_i\overline{A_i}=v=\overline{A_w}v=\sum_{i=0}^d c_i\overline{A_w}\overline{A_i}=c_z\overline{A_u}+\mathcal{X},
\end{align}
where $\mathcal{X}\in \F S$ and $R_u\notin U(\mathcal{X})$. Since $\bigcup_{i=0}^d\{\overline{A_i}\}$ is an $\F$-basis of $\F S$, $(4.2)$ tells us that $c_u=c_z$, which
implies that $c_\ell=c_z$ for any $0\leq \ell\leq d$ and $p_{y\ell}^x>0$ by the assumptions of $u$.  Therefore we write $c$ for $c_z$ and deduce that
\begin{equation}
e_x=\sum_{f=0}^dc_f\overline{p_{yf}^x}=\sum_{f=0}^dc\overline{p_{yf}^x}=c(\sum_{f=0}^d\overline{p_{yf}^x})=c\overline{k_y},
\end{equation}
where the rightmost equality is from Lemma \ref{L;Intersectionnumber} (ii). By the inductive hypothesis, we have $R_z\in (S_{p'})^{m-1}\subseteq U(v)$. In particular, we have $c\neq 0$. Recall that we have already known that $\overline{k_y}\neq 0$. So we get $e_x\neq 0$ by $(4.3)$. By the inequality $e_x\neq 0$ and $(4.1)$, we can deduce that
\begin{equation*}
R_x\in U(\overline{A_y}v)=U(\overline{k_y}v)=U(v),
\end{equation*}
which contradicts the assumption $R_x\notin U(v)$. We thus have $(S_{p'})^m\subseteq U(v)$ for any $m\in \mathbb{N}$.
The proof is now complete.
\end{proof}
We use Lemma \ref{L;corecorelemma} to prove the following corollary.
\begin{cor}\label{C;corecorelemma2}
If $O^\vartheta(S)\subseteq O_{\vartheta}(S)$ and $\langle S_{p'}\rangle=S$, then $S$ is a $p$-transitive scheme.
\end{cor}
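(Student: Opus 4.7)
The plan is to show that every $v = \sum_{i=0}^d c_i \overline{A_i} \in \F S$ satisfying $\overline{A_a} v = \overline{k_a} v$ for all $0 \leq a \leq d$ lies in $\langle \overline{J}\rangle_\F$; by Lemma \ref{L;allone} this is equivalent to $p$-transitivity. I would pass to the auxiliary vector $w = v - c_0 \overline{J}$, which inherits the same annihilation property (since $\overline{J}$ itself generates a trivial submodule) and has $R_0 \notin U(w)$. The task thus reduces to proving $w = 0$.

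Assume $w \neq 0$. Lemma \ref{L;vectors}(i) applied to $w$ immediately yields $c_u = c_0$ for every $R_u \in S_{p'}$. I would then upgrade this by induction on $m$ to the assertion that $c_x = c_0$ for all $R_x \in (S_{p'})^m$. Since $\langle S_{p'}\rangle = \bigcup_m (S_{p'})^m = S$ by hypothesis and Lemma \ref{L;closedset}, this forces $w = 0$, contradicting the assumption.

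The inductive step reuses the mechanism from the proof of Lemma \ref{L;corecorelemma}. Given $R_x \in R_yR_z$ with $R_y \in S_{p'}$ and $R_z \in (S_{p'})^{m-1}$, the hypothesis $O^\vartheta(S) \subseteq O_\vartheta(S)$ is invoked as follows: for every $R_u$ with $p_{yu}^x > 0$, Lemma \ref{L;Intersectionnumber}(iv) puts $R_u \in R_{y^*} R_x \subseteq R_{y^*} R_y R_z$, and since $R_{y^*} R_y \subseteq O^\vartheta(S) \subseteq O_\vartheta(S)$, Lemma \ref{L;Intersectionnumber}(v),(vi) produces a thin $R_w$ with $R_wR_z = \{R_u\}$. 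Reading off the coefficient of $\overline{A_u}$ in $\overline{A_w} v = v$ then gives $c_u = c_z$. Substituting this equality into the coefficient of $\overline{A_x}$ in $\overline{A_y} v = \overline{k_y} v$ and invoking Lemma \ref{L;Intersectionnumber}(ii) collapses the sum to
$$\overline{k_y}\, c_x = \sum_{i=0}^d c_i \overline{p_{yi}^x} = c_z \sum_{i=0}^d \overline{p_{yi}^x} = c_z \overline{k_y};$$
since $R_y \in S_{p'}$ means $\overline{k_y} \neq 0$, we conclude $c_x = c_z = c_0$ by the inductive hypothesis.

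The main obstacle will be recognizing that the chain of reasoning in the proof of Lemma \ref{L;corecorelemma}, which ostensibly only establishes $R_x \in U(v)$, actually delivers the sharper coefficient equality $c_u = c_z$ this argument needs. Once that observation is in hand the inductive step proceeds mechanically, and the reduction through $w = v - c_0 \overline{J}$ is a cosmetic device that supplies the $S_{p'}$ base case without an awkward case split on whether $R_0 \in U(v)$.
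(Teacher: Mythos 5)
Your proposal is correct, and the engine is the same as the paper's: the induction over $(S_{p'})^m$ from Lemma \ref{L;corecorelemma}, driven by the fact that $R_{y^*}R_y\subseteq O^\vartheta(S)\subseteq O_\vartheta(S)$ lets a thin relation transport the coefficient $c_z$ onto every $R_u$ with $p_{yu}^x>0$, after which Lemma \ref{L;Intersectionnumber} (ii) collapses the coefficient of $\overline{A_x}$ in $\overline{A_y}v=\overline{k_y}v$ to $c_z\overline{k_y}=\overline{k_y}c_x$. Where you genuinely diverge is in the packaging. The paper keeps Lemma \ref{L;corecorelemma} as a pure support statement ($\langle S_{p'}\rangle\subseteq U(v)$ when $R_0\in U(v)$), and the corollary must then (a) split into the cases $R_0\in U(v)$ and $R_0\notin U(v)$, handling the latter via the decomposition of Lemma \ref{L;smalllemma}, and (b) rerun the support lemma on $v-c_u\overline{J}$ for each offending coefficient $c_u\neq c_0$ to upgrade ``full support'' to ``all coefficients equal''. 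You instead subtract $c_0\overline{J}$ once at the outset, so that Lemma \ref{L;vectors} (i) delivers the base case $c_u=c_0$ for $R_u\in S_{p'}$ without any case split, and you strengthen the inductive statement to the coefficient equality $c_x=c_0$ for $R_x\in (S_{p'})^m$; your key observation---that the paper's own computation already proves $c_x=c_z$ and merely reads off the weaker conclusion $c_x\neq 0$---is accurate. The trade-off is that the paper's route reuses Lemma \ref{L;corecorelemma} verbatim as a black box (and that lemma is stated independently), while yours requires reopening its proof to extract the sharper equality; in exchange you dispense with Lemma \ref{L;smalllemma} and the per-coefficient subtraction trick entirely. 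One small point to make explicit in a write-up: Lemma \ref{L;vectors} is stated for nonzero vectors, so the base case should begin ``either $w=0$ and we are done, or $w\neq 0$ and (i) applies''---which is exactly what your reductio framing supplies.
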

\begin{proof}
For any given trivial $\F S$-submodule $\langle v\rangle_\F$ of the regular $\F S$-module, we write $v=\sum_{i=0}^d c_i\overline{A_i}$, where $c_i\in\F$ for any $0\leq i\leq d$. By Lemma \ref{L;allone}, it  suffices to show that $v\in \langle \overline{J}\rangle_\F$. We distinguish two cases.
\begin{enumerate}[\text{Case} 1:]
\item $R_0\in U(v)$.
\end{enumerate}
In this case, since $R_0\in U(v)$ and $\langle S_{p'}\rangle=S$, by Lemma \ref{L;corecorelemma}, we can deduce that $S=\langle S_{p'}\rangle\subseteq U(v)\subseteq S$. So $S=U(v)$. We thus have $c_i\neq 0$ for any $0\leq i\leq d$.

It suffices to show that $c_i=c_0$ for any $0\leq i\leq d$. Assume that $c_u\neq c_0$ for some $0\leq u\leq d$. Set $w=v-c_u\overline{J}$ and observe that $\langle w\rangle_\F$ is a trivial $\F S$-submodule of the regular $\F S$-module. Moreover, it is clear that $R_u\notin U(w)$. Since $c_u\neq c_0$, it is also obvious that $R_0\in U(w)$. By Lemma \ref{L;corecorelemma}, we have $R_u\in S=\langle S_{p'}\rangle\subseteq U(w)$, which is a contradiction. So we have $c_i=c_0$ for any $0\leq i\leq d$, which implies that $v\in \langle \overline{J}\rangle_\F$.
\begin{enumerate}[\text{Case} 2:]
\item $R_0\notin U(v)$.
\end{enumerate}
In this case, by Lemma \ref{L;smalllemma}, notice that $v=x+y$, where we have $x,y\in \F S$ and $R_0\in U(x)\cap U(y)$. Moreover, both $\langle x\rangle_\F$ and $\langle y\rangle_\F$ are trivial $\F S$-submodules of the regular $\F S$-module. We thus have $v=x+y\in \langle\overline{J}\rangle_\F$ by Case 1.

The corollary follows by all listed cases.
\end{proof}
Theorem \ref{T;B} is shown by Corollaries \ref{C;necessarypart} and \ref{C;corecorelemma2}. We end this paper by examples.
\begin{eg}\label{E;mainexample22}
Assume that $p=2$. We give some examples of Theorem \ref{T;B}.
\begin{enumerate}[(i)]
\item Assume that $S$ is the scheme in Example \ref{E;singularset}. Notice that $S$ is a scheme with thin thin residue. Moreover, we have $\langle S_{2'}\rangle=O_\vartheta(S)\neq S$. Therefore $S$ is not a $2$-transitive scheme by Theorem \ref{T;B}.
\item Assume that $S$ is the scheme of order 6, No. 4 in \cite{HM}. It is a scheme with thin thin residue. We have $S=\{R_0, R_1, R_2, R_3\}$, where $k_1=k_2=1$ and $k_3=3$. We thus have $S=\langle S_{2'}\rangle$. Therefore $S$ is a $2$-transitive scheme by Theorem \ref{T;B}.
\end{enumerate}
\end{eg}
\subsection*{Acknowledgements}
The author gratefully thanks his Ph.D. supervisor Dr. Kay Jin Lim for organizing the seminar of the theory of association schemes, where all the main results of this paper are motivated and obtained. He also thanks Dr. Kay Jin Lim and Prof. Gang Chen for encouraging him to learn the theory of association schemes. Furthermore, he gratefully thanks Prof. Akihide Hanaki for some helpful comments.

\end{document}